\documentclass[a4paper,10pt]{article}

\usepackage[T1]{fontenc}
\usepackage[utf8]{inputenc}
\usepackage[margin=1.9cm]{geometry}
\usepackage{amsmath,amssymb,amsthm,mathtools}

\usepackage{booktabs}
\usepackage{array}
\usepackage{graphicx}
\usepackage{xcolor}
\usepackage{hyperref}
\usepackage{cleveref}
\usepackage{setspace}
\usepackage{enumitem}
\usepackage{thmtools}
\usepackage{mdframed}
\usepackage{tikz}
\usetikzlibrary{arrows.meta,calc,decorations.pathmorphing,decorations.markings,shadings,patterns,positioning}
\usepackage{pgfplots}
\pgfplotsset{compat=1.18}
\usepackage{caption}
\usepackage{subcaption}
\usepackage{titlesec}
\usepackage{fancyhdr}

\usepackage[ruled,vlined]{algorithm2e}

\SetKwInput{KwParams}{Parameters}
\SetKwInput{KwInit}{initialize}
\SetKwInput{KwOutput}{Output}
\SetKw{KwStop}{stop}

\SetAlgoNlRelativeSize{-1}
\SetAlFnt{\small}
\SetAlCapFnt{\small}
\SetAlCapNameFnt{\small}
\SetAlgoCaptionSeparator{:}
\DontPrintSemicolon

\definecolor{darkblue}{RGB}{31,56,100}
\definecolor{medblue}{RGB}{46,80,144}
\definecolor{boxbg}{RGB}{238,244,251}
\definecolor{darkgrey}{RGB}{60,60,60}
\definecolor{keepgreen}{RGB}{20,120,60}
\definecolor{trimorange}{RGB}{190,100,10}
\definecolor{manifoldcol}{RGB}{221,233,246}
\definecolor{tancol}{RGB}{255,244,214}
\definecolor{tanline}{RGB}{193,68,14}

\newcommand{\manifoldpatch}{%
	\fill[manifoldcol]
	(-2.5,-0.55) .. controls (-1.6,0.30) and (-0.6,0.20)  .. (0.1,0.15)
	.. controls (1.0,0.10)  and (1.7,0.65)  .. (2.5,-0.05)
	-- (2.5,-0.45)
	.. controls (1.7,0.25)  and (1.0,-0.30) .. (0.1,-0.25)
	.. controls (-0.6,-0.20) and (-1.6,-0.10) .. (-2.5,-0.95)
	-- cycle;
	\draw[thick,darkblue]
	(-2.5,-0.55) .. controls (-1.6,0.30) and (-0.6,0.20) .. (0.1,0.15)
	.. controls (1.0,0.10)  and (1.7,0.65) .. (2.5,-0.05);
}

\hypersetup{
	colorlinks=true, linkcolor=medblue, citecolor=medblue, urlcolor=medblue,
	pdftitle={RIAG-R: Riemannian Inertial Adaptive Gradient with Restart}
}

\titleformat{\section}{\large\bfseries\color{darkblue}}{\thesection.}{0.6em}{}
[\vspace{-0.3em}\textcolor{medblue}{\rule{\linewidth}{0.5pt}}]
\titleformat{\subsection}{\normalsize\bfseries\color{medblue}}{\thesubsection.}{0.5em}{}

\mdfdefinestyle{thmstyle}{
	linecolor=medblue, linewidth=1pt, backgroundcolor=boxbg,
	innertopmargin=5pt, innerbottommargin=5pt, innerleftmargin=10pt,
	innerrightmargin=8pt, leftline=true, rightline=false,
	topline=false, bottomline=false
}
\mdfdefinestyle{defstyle}{
	linecolor=darkblue, linewidth=1pt, backgroundcolor=boxbg!60,
	innertopmargin=5pt, innerbottommargin=5pt, innerleftmargin=10pt,
	innerrightmargin=8pt, leftline=true, rightline=false,
	topline=false, bottomline=false
}
\mdfdefinestyle{remarkstyle}{
	linecolor=darkgrey!50, linewidth=0.8pt, backgroundcolor=white,
	innertopmargin=4pt, innerbottommargin=4pt, innerleftmargin=10pt,
	innerrightmargin=8pt, leftline=true, rightline=false,
	topline=false, bottomline=false
}

\newtheoremstyle{riagthm}{4pt}{4pt}{\itshape}{}{\bfseries\color{darkblue}}{.}{0.5em}{}
\newtheoremstyle{riagdef}{4pt}{4pt}{\normalfont}{}{\bfseries\color{medblue}}{.}{0.5em}{}
\newtheoremstyle{riagrm}{3pt}{3pt}{\normalfont}{}{\itshape}{.}{0.5em}{}

\makeatletter
\theoremstyle{riagthm}
\@ifundefined{c@theorem}{\newtheorem{theorem}{Theorem}[section]}{}
\@ifundefined{c@lemma}{\newtheorem{lemma}[theorem]{Lemma}}{}
\@ifundefined{c@corollary}{}{}
\@ifundefined{c@proposition}{\newtheorem{proposition}[theorem]{Proposition}}{}

\theoremstyle{riagdef}
\@ifundefined{c@definition}{\newtheorem{definition}[theorem]{Definition}}{}
\@ifundefined{c@assumption}{\newtheorem{assumption}{Assumption}}{}

\theoremstyle{riagrm}
\@ifundefined{c@remark}{\newtheorem{remark}[theorem]{Remark}}{}
\@ifundefined{c@example}{}{}
\makeatother

\DeclareMathOperator{\grad}{grad}

\newcommand{\M}{\mathcal{M}}
\newcommand{\norm}[1]{\left\lVert#1\right\rVert}
\newcommand{\inner}[2]{\langle #1,\, #2 \rangle}
\newcommand{\R}{\mathbb{R}}
\newcommand{\expmap}{\mathrm{exp}}
\newcommand{\logmap}{\mathrm{log}}

\newcommand{\defeq}{\coloneqq}

\begin{document}
	\sloppy
	
	\begin{center}
		{\color{darkblue}\rule{\linewidth}{2pt}}\\[0.4cm]
		{\LARGE\bfseries\color{darkblue}
			A Riemannian Inertial Adaptive Gradient Method with
			Momentum Restart}\\[0.3cm]
		{\color{medblue}\rule{\linewidth}{1pt}}\\[0.5cm]
		{\large Lateef O. Jolaoso$^1$\quad Professer V. Ndlovu$^2$ \quad Maggie Aphane$^{3}$}\\[0.3cm]
		{\normalsize
			$^1$Big Data Technologies and Innovation Lab,
			University of Hertfordshire, Hatfield, United Kingdom\\
			$^{1,2,3}$Department of Mathematics and Applied Mathematics,
			Sefako Makgatho Health Sciences University, Ga-Rankuwa, Pretoria, South Africa\\[0.2em]
			\texttt{L.Jolaoso@Herts.ac.uk}}\\[0.5cm]
		{\small\today}
	\end{center}

	\begin{abstract}
\noindent
We propose \emph{RIAG-R}, a Riemannian inertial adaptive-gradient method with gradient-triggered restart for smooth optimization on Riemannian manifolds. RIAG-R combines geodesic inertial extrapolation with an AdaGrad-type scalar step-size and an alignment test that suppresses momentum whenever the inertial direction has an uphill component. This provides adaptive step-size selection while safeguarding the iteration against poorly aligned momentum. We establish a one-step descent inequality and, under suitable step-size conditions, prove convergence to stationarity together with an $\mathcal{O}(\varepsilon^{-2})$ iteration complexity for finding an $\varepsilon$-stationary point. Under a Riemannian Polyak-\L ojasiewicz condition, we further obtain a linear convergence rate for the iterates generated by the algorithm. We then performed some numerical experiments on eight real-data instances across sphere, Stiefel, symmetric positive-definite, and Grassmann manifolds which show that restart remains inactive when inertial is well aligned, while becoming effective on the saddle-rich sphere problems.

\medskip
{\itshape Keywords:} Riemannian optimization $\cdot$ adaptive step-size $\cdot$ inertial method $\cdot$ momentum restart $\cdot$ nonconvex optimization $\cdot$ convergence analysis.

\noindent {\bf 2010 MSC classification:} 47H09, 47J25, 65K05, 90C26, 90C30.
\end{abstract}
	
	\section{Introduction}\label{Sec1:Intro}

Optimization problems with intrinsic geometric constraints arise throughout machine learning, data analysis, signal processing, and scientific computing. Typical examples include eigenvalue and principal-subspace problems on spheres and Grassmann manifolds, orthogonality-constrained models on Stiefel manifolds, and covariance-type problems on the cone of symmetric positive-definite matrices. Riemannian optimization treats such constraints as part of the geometry of the search space rather than as external Euclidean constraints; see, for example, \cite{absil2008,boumal2023}. This viewpoint permits first-order algorithms to move along feasible directions in tangent spaces and return to the manifold through an exponential map or a retraction.

The basic representative is Riemannian gradient descent (RGD). Given $x_n\in\M$, it replaces the Euclidean update $x_{n+1}=x_n-\alpha\nabla f(x_n)$ by
\[
 x_{n+1}=\expmap_{x_n}\!\bigl(-\alpha_n\grad f(x_n)\bigr).
\]
For smooth objectives, RGD inherits much of the familiar first-order complexity theory: under appropriate Lipschitz-gradient assumptions, one obtains global stationarity guarantees of order $O(\varepsilon^{-2})$ in the nonconvex setting \cite{boumal2019,zhang2016first}. Nevertheless, the basic method leaves two practical issues unresolved. First, a useful global smoothness constant is often unavailable or conservative, making fixed step-size selection difficult. Second, gradient descent can be slow in poorly conditioned regions, motivating the use of momentum or inertial extrapolation.

Both remedies are well developed in Euclidean optimization but become more delicate on manifolds. Adaptive methods such as AdaGrad alter the step-size using information accumulated from previous gradients \cite{duchi2011}; their Riemannian counterparts must respect a metric and tangent spaces that vary with the iterate \cite{becigneul2019,kasai2019}. Momentum creates an additional geometric difficulty: vectors attached to different points belong to different tangent spaces and cannot be combined without an appropriate geometric operation. Riemannian acceleration and momentum methods therefore rely on logarithmic maps, exponential maps, vector transports, or related constructions \cite{alimisis2020,ahn2020,alimisis2021}. In fact, curvature introduces distortion absent from the Euclidean setting, and a successful accelerated construction must explicitly control that distortion. In particular, Alimisis et al. develop a continuous-time view of Riemannian acceleration, whereas Ahn and Sra identify metric distortion as a central obstacle in extending Nesterov-type acceleration globally \cite{alimisis2020,ahn2020}.

A second difficulty is that momentum is not uniformly beneficial. When the extrapolation direction becomes poorly aligned with the current descent geometry, inertial can produce oscillation, overshooting, or prolonged stagnation. In Euclidean accelerated methods this observation motivates adaptive restart strategies, which discard momentum when a simple progress or alignment test indicates that it is no longer useful \cite{odonoghue2015,su2016differential}. The same idea is especially natural on a manifold, where the local geometry itself changes along the trajectory. This suggests the following question:
\begin{quote}
\emph{Can one combine adaptive step-size selection, Riemannian inertial, and a computationally inexpensive restart rule in a single first-order scheme while retaining transparent stationarity guarantees?}
\end{quote}

This paper answers this question through RIAG-R (Riemannian Inertial Adaptive Gradient with Restart). Starting from two consecutive iterates, RIAG-R forms the tangent-space direction $d_n=-\logmap_{x_n}(x_{n-1})$, scales it by an adaptive inertial coefficient, and extrapolates to $w_n$. Before accepting this extrapolation, the method checks the sign of $\inner{\grad f(x_n)}{d_n}$. A positive value means that the inherited momentum has an uphill component at the current point; RIAG-R then restarts by setting the inertial coefficient to zero. The gradient step is taken from $w_n$ with the scalar adaptive step-size $\alpha_n=\eta/(\sqrt{G_n}+\varepsilon)$, where $G_n$ accumulates squared Riemannian gradient norms. Thus the restart test changes only the momentum component and costs one additional inner product.

An important feature of this construction is that restart is not intended to operate at every iteration. Rather, it acts only when the local geometry and the inherited direction indicate that momentum may be detrimental. This distinction is borne out by the numerical experiments: on six of the eight tested configurations the restart condition is never activated, so RIAG-R reduces exactly to its restart-free inertial counterpart, whereas restart becomes active on the two saddle-rich sphere problems. Thus, the experiments provide a direct way of separating the benefit of inertial from the additional safeguarding effect of restart.

\subsection{Relation to existing Riemannian first-order methods}
The proposed method lies at the intersection of three established lines of work. Classical Riemannian gradient, conjugate-gradient, trust-region, and stochastic methods provide the geometric algorithmic framework \cite{absil2008,boumal2019,bonnabel2013,sato2016dai,zhu2017riemannian}. Riemannian adaptive methods extend Euclidean learning-rate mechanisms to curved spaces \cite{becigneul2019,kasai2019}. Riemannian acceleration and momentum methods investigate how inertial information can be represented and controlled when tangent spaces change from point to point \cite{alimisis2020,ahn2020,alimisis2021}. RIAG-R is not intended as a replacement for the more elaborate accelerated constructions in these works. Its purpose is different: it gives a simple gradient-based scheme in which adaptivity, inertial, and restart are visible as separate algorithmic components and can therefore be analysed and tested individually.

\subsection{Main contributions}
The main contributions of this work are summarized as follows.
\begin{enumerate}[leftmargin=2em,itemsep=4pt]
\item \textbf{An adaptive inertial Riemannian method with restart.}
We introduce RIAG-R, which combines geodesic inertial extrapolation, an AdaGrad-type scalar step-size, and a gradient-alignment restart mechanism. The restart suppresses momentum whenever $\inner{\grad f(x_n)}{d_n}>0$, thereby preventing the algorithm from accepting an inertial direction with a local uphill component. The resulting safeguard requires only one additional inner product per iteration.

\item \textbf{Global stationarity and complexity guarantees.}
Under suitable step-size stability conditions, we prove square-summability of the inter-iterate displacements and weighted summability of the Riemannian gradient norms. Consequently, the method admits subsequential stationarity; when the adaptive step-sizes are bounded away from zero, the full gradient sequence converges to zero, every cluster point is critical, and an $\varepsilon$-stationary point is obtained within $O(\varepsilon^{-2})$ iterations.

\item \textbf{Linear convergence under a Riemannian PL condition.}
Under a Riemannian Polyak--\L ojasiewicz condition, we establish geometric decay of the objective gap up to an explicitly quantified inertia-dependent residual. The result identifies the price introduced by extrapolation and clarifies the potential role of restart in controlling poorly aligned momentum.

\item \textbf{Numerical assessment across four manifold geometries.}
We evaluate RIAG-R on eight real-data problem instances involving the sphere, Stiefel, symmetric positive-definite, and Grassmann manifolds, and compare it with a non-inertial adaptive Riemannian gradient method and the corresponding restart-free inertial scheme. On the six Stiefel, SPD, and Grassmann configurations, restart is never triggered and RIAG-R coincides with the restart-free inertial method, whereas on both saddle-rich sphere instances restart becomes active and improves upon unsafeguarded inertial. On the larger Digits sphere problem, RIAG-R is the only method to attain a gradient norm below tolerance level within the allowed number of iterations. The experiments therefore isolate when inertial alone is sufficient and when the restart safeguard becomes useful.
\end{enumerate}

The remainder of the paper is organized as follows. Section~\ref{sec:prelim} introduces the Riemannian geometric notation and analytical tools used throughout the paper. Section~\ref{sec:algorithm} presents RIAG-R, explains the roles of adaptive inertial, gradient-triggered restart, and adaptive step-size selection, and Section~\ref{sec:convergence} develops the convergence analysis, including stationarity, iteration complexity, criticality of cluster points, and PL-type geometric behaviour. Section~\ref{sec:numerical} evaluates the method on eight real-data instances spanning four different manifold geometries and examines separately the effects of inertial and restart relative to non-inertial and restart-free baselines. Section~\ref{sec:conclusion} summarizes the theoretical and computational findings and discusses directions for further research.

	\section{Riemannian Geometry Preliminaries}
	\label{sec:prelim}
	
	We collect below the definitions and results used throughout the paper.
	We follow the conventions of \cite{docarmo1992} and \cite{absil2008}.
	
	\begin{definition}
		A \emph{Riemannian manifold} $(\M,\inner{\cdot}{\cdot})$ is a smooth
		manifold $\M$ equipped with a smoothly varying family of inner products
		$\inner{\cdot}{\cdot}_x$ on the tangent spaces $T_x\M$.
		The Riemannian norm is $\norm{v}_x=\langle v,v\rangle_{x}^{1/2}$.
		The \emph{Riemannian distance} between $p,q\in\M$ is
		\[
		d(p,q) \;=\; \inf\bigl\{\ell(\gamma) :\, \gamma \text{ smooth curve from }
		p \text{ to } q\bigr\},
		\]
		where $\ell(\gamma)=\int_a^b\norm{\dot\gamma(t)}_{\gamma(t)}\,dt$.
	\end{definition}
	
	\begin{definition}
		The \emph{Levi-Civita connection} $\nabla$ is the unique torsion-free,
		metric-compatible connection on $(\M,\inner{\cdot}{\cdot})$.
		For a curve $\gamma:[a,b]\to\M$ and $v\in T_{\gamma(a)}\M$, the
		\emph{parallel transport} $\mathcal{P}_{\gamma,a,t}: T_{\gamma(a)}\M\to
		T_{\gamma(t)}\M$ is the unique linear isometry satisfying
		$\nabla_{\dot\gamma(t)} V(t)=0$, $V(a)=v$.
		A smooth curve $\gamma$ satisfying $\nabla_{\dot\gamma}\dot\gamma=0$ is
		called a \emph{geodesic}.
	\end{definition}
	
	\begin{definition}
		For $x\in\M$ and $v\in T_x\M$, the \emph{exponential map}
		$\expmap_x(v)\in\M$ is defined as $\gamma(1)$, where $\gamma$ is the
		unique geodesic with $\gamma(0)=x$ and $\dot\gamma(0)=v$.
		$\M$ is \emph{geodesically complete} if $\expmap_x$ is defined on all of
		$T_x\M$.
		The \emph{logarithmic map} $\logmap_x:\M\to T_x\M$ is the local inverse of
		$\expmap_x$, satisfying $\expmap_x(\logmap_x(y))=y$ and
		$\norm{\logmap_x(y)}_x = d(x,y)$.
	\end{definition}
	
	\begin{definition}[Riemannian gradient]
		The \emph{Riemannian gradient} of $f:\M\to\R$ at $x$ is the unique
		vector $\grad f(x)\in T_x\M$ satisfying
		$\inner{\grad f(x)}{v}_x = df_x(v)$ for all $v\in T_x\M$.
		For embedded submanifolds of $\R^N$ with the induced metric,
		$\grad f(x)=\mathrm{Proj}_{T_x\M}(\nabla f(x))$.
	\end{definition}
	
	\begin{definition}
		\label{def:lipschitz}
		The gradient field $\grad f$ is \emph{$L$-Lipschitz continuous} if for any
		$x,y\in\M$ joined by a minimising geodesic $\gamma$:
		\[
		\norm{\mathcal{P}_{\gamma,0,1}\grad f(x) - \grad f(y)}_y
		\;\leq\; L\,d(x,y).
\]
\end{definition}

The fundamental tool in most convergence proofs is the following descent
inequality on manifolds.
\begin{lemma}[Riemannian Descent Lemma {\cite[Lemma~1]{boumal2019}}]
	\label{lem:descent-lemma}
	Let $f:\M\to\R$ have $L$-Lipschitz Riemannian gradient.
	Then for every $x\in\M$ and $v\in T_x\M$,
	\begin{equation}
		f\bigl(\expmap_x(v)\bigr)
		\;\leq\;
		f(x) + \inner{\grad f(x)}{v}_x + \frac{L}{2}\norm{v}_x^2.
		\label{eq:descent-lemma}
	\end{equation}
\end{lemma}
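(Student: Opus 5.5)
The plan is to reduce the statement to the one-dimensional Euclidean descent lemma along the geodesic $\gamma(t)=\expmap_x(tv)$, $t\in[0,1]$, which is defined because $\M$ is assumed geodesically complete (or, if not, because $v$ lies in the domain of $\expmap_x$). Set $\phi(t)=f(\gamma(t))$. Since $\gamma$ is a geodesic with $\gamma(0)=x$ and $\dot\gamma(0)=v$, the chain rule together with the definition of the Riemannian gradient gives $\phi'(t)=\inner{\grad f(\gamma(t))}{\dot\gamma(t)}_{\gamma(t)}$. Hence $\phi'(0)=\inner{\grad f(x)}{v}_x$, and by the fundamental theorem of calculus
\[
f(\expmap_x(v))-f(x)-\inner{\grad f(x)}{v}_x=\int_0^1\bigl(\phi'(t)-\phi'(0)\bigr)\,dt .
\]

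The key step is to bound $|\phi'(t)-\phi'(0)|$ by $Lt\norm{v}_x^2$. For this I would use parallel transport $\mathcal{P}_{\gamma,0,t}$ along $\gamma$. The geodesic equation $\nabla_{\dot\gamma}\dot\gamma=0$ says exactly that $\dot\gamma(t)=\mathcal{P}_{\gamma,0,t}v$. Because parallel transport is a linear isometry,
\[
\phi'(0)=\inner{\mathcal{P}_{\gamma,0,t}\grad f(x)}{\dot\gamma(t)}_{\gamma(t)} .
\]
Subtracting this from $\phi'(t)$ and applying Cauchy--Schwarz yields
\[
|\phi'(t)-\phi'(0)|\le\norm{\grad f(\gamma(t))-\mathcal{P}_{\gamma,0,t}\grad f(x)}_{\gamma(t)}\,\norm{v}_x .
\]
Definition~\ref{def:lipschitz} then bounds the first factor by $L\,d(x,\gamma(t))$. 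The length of $\gamma|_{[0,t]}$ is $t\norm{v}_x$, so $d(x,\gamma(t))\le t\norm{v}_x$. Integrating $Lt\norm{v}_x^2$ over $[0,1]$ gives the factor $\tfrac{L}{2}\norm{v}_x^2$, which is \eqref{eq:descent-lemma}.

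The main obstacle is that Definition~\ref{def:lipschitz} is stated only for pairs of points joined by a \emph{minimising} geodesic, with transport along that geodesic. The segment $\gamma|_{[0,t]}$ need not be minimising once $\norm{v}_x$ exceeds the injectivity radius. My first remedy is to read the Lipschitz hypothesis in the form used in \cite{boumal2019}: the bound $\norm{\mathcal{P}_{\gamma,0,t}\grad f(x)-\grad f(\gamma(t))}\le L\,\ell(\gamma|_{[0,t]})$ holds along any geodesic. I would then note that this form follows from the minimising-geodesic version by subdividing $[0,t]$ into short pieces on which $\gamma$ is minimising. Chaining the estimates with the triangle inequality, and using that parallel transport is isometric and composes along concatenated pieces, the lengths add up to $t\norm{v}_x$. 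With this local-to-global step the argument above goes through for all $v\in T_x\M$.
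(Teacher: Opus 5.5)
The paper gives no proof of this lemma; it only cites \cite[Lemma~1]{boumal2019}. Your argument is the standard proof behind that citation, and it is correct; it even yields the two-sided estimate. That argument differentiates $\phi(t)=f(\expmap_x(tv))$, uses $\dot\gamma(t)=\mathcal{P}_{\gamma,0,t}v$ and the isometry of parallel transport to write $\phi'(t)-\phi'(0)=\inner{\grad f(\gamma(t))-\mathcal{P}_{\gamma,0,t}\grad f(x)}{\dot\gamma(t)}$, bounds this by $Lt\norm{v}_x^2$, and integrates.

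Your local-to-global step is genuinely needed. With Definition~\ref{def:lipschitz} as the paper states it (minimising geodesics only), the direct argument gives \eqref{eq:descent-lemma} only when $\gamma|_{[0,1]}$ is itself minimising, e.g.\ for $\norm{v}_x$ below the injectivity radius at $x$. Your chaining is what extends it to all $v\in T_x\M$: set $g_i=\grad f(\gamma(t_i))$ and $t_k=t$, so that $\mathcal{P}_{\gamma,0,t}g_0-g_k=\sum_{i=1}^k\mathcal{P}_{\gamma,t_i,t}\bigl(\mathcal{P}_{\gamma,t_{i-1},t_i}g_{i-1}-g_i\bigr)$. Each summand has norm at most $L(t_i-t_{i-1})\norm{v}_x$, and these sum to $Lt\norm{v}_x$.

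One point is worth making explicit: why a finite partition into minimising pieces exists. Since $\gamma([0,1])$ is compact, the injectivity radius is bounded below along it by some $r>0$. This follows from its continuity on a complete manifold, or alternatively from uniformly normal neighbourhoods plus a Lebesgue-number argument. Any geodesic piece of length less than $r$ is then minimising, and it is the unique minimising geodesic between its endpoints, so Definition~\ref{def:lipschitz} applies to exactly that piece.
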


	We also record a useful elementary inequality.
	
	\begin{lemma}[Young's inequality]
		\label{lem:young}
		For all $a,b>0$ and $\tau>0$,
		\begin{equation}
			ab \;\leq\; \frac{\tau}{2}a^2 + \frac{1}{2\tau}b^2,
			\qquad\text{in particular } ab\leq\tfrac12(a^2+b^2).
			\label{eq:young-scalar}
		\end{equation}
		Equivalently, for all $a,b\in\R$ and $\theta>0$,
		\begin{equation}
			(a+b)^2 \;\leq\; (1+\theta)a^2 + \Bigl(1+\frac1\theta\Bigr)b^2.
			\label{eq:young-square}
		\end{equation}
	\end{lemma}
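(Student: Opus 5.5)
The first inequality follows by completing a square, and the second by applying the first with a suitable choice of $\tau$.

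\emph{Step 1 (scalar form).} For any real $a,b$ and $\tau>0$, expand
\[
0\le \Bigl(\sqrt{\tau}\,a-\tfrac{1}{\sqrt{\tau}}\,b\Bigr)^2=\tau a^2-2ab+\tfrac1\tau b^2 .
\]
Dividing by $2$ gives $ab\le \tfrac{\tau}{2}a^2+\tfrac{1}{2\tau}b^2$. This holds for all real $a,b$, so in particular for $a,b>0$ as stated. The special case $ab\le\tfrac12(a^2+b^2)$ is simply the choice $\tau=1$.

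\emph{Step 2 (squared-sum form).} Write $(a+b)^2=a^2+2ab+b^2$ and bound the cross term $2ab$ by applying Step~1 with $\tau=\theta$ and multiplying through by $2$:
\[
2ab\le \theta a^2+\tfrac1\theta b^2 .
\]
Here $a,b$ may be arbitrary reals, since Step~1 did not use positivity; if one insists on the positive-case statement, note that $2ab\le 2|a||b|$ and apply it to $|a|,|b|$, with the degenerate cases $a=0$ or $b=0$ trivial. Substituting the bound gives
\[
(a+b)^2\le (1+\theta)a^2+\bigl(1+\tfrac1\theta\bigr)b^2 .
\]

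\emph{Equivalence.} The word ``equivalently'' is justified in both directions. Step~2 derives the second inequality from the first. Conversely, the second inequality with $\theta=\tau$ rearranges to $2ab\le \tau a^2+\tfrac1\tau b^2$, which is the first inequality.

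The only point requiring care is the sign bookkeeping when passing from positive $a,b$ to arbitrary reals. Passing through absolute values, or observing that the completed square needs no sign assumption, disposes of it. There is no genuine obstacle here.
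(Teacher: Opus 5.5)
The paper states this lemma without proof, as a standard elementary fact, so there is no argument of its own to compare against. Your proof is correct and is the standard one: you complete the square to get the first inequality, then bound the cross term $2ab$ using $\tau=\theta$, and you handle the signs and both directions of the ``equivalently'' properly. Since your Step~1 never uses positivity, the same computation with $\langle u,v\rangle$ in place of $ab$ also gives the vector form $\|u+v\|^2\le(1+\theta)\|u\|^2+(1+1/\theta)\|v\|^2$ in any inner-product space; that vector form is the one the paper actually invokes (with $\theta=1$) in the proof of Proposition~\ref{cor:linear-rate}.
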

	
	\section{The RIAG-R Algorithm}
	\label{sec:algorithm}
	
	We consider the optimization problem
	\begin{equation}
		\min_{x\in\M}\; f(x),
		\label{eq:problem}
	\end{equation}
	where $f:\M\to\R$ is continuously differentiable and $\M$ is a complete
	Riemannian manifold.
	We work throughout under the following standing assumptions.
	
	\begin{assumption}[$L$-Lipschitz gradient]
		\label{ass:lipschitz}
		The Riemannian gradient $\grad f$ is $L$-Lipschitz continuous on $\M$
		(Definition~\ref{def:lipschitz}).
	\end{assumption}
	
	\begin{assumption}[Lower bound]
		\label{ass:lower}
		There exists $f_*\in\R$ such that $f(x)\geq f_*$ for all $x\in\M$.
	\end{assumption}

	\subsection{Algorithm Description}

	\begin{algorithm}[H]
		\caption{RIAG-R: Riemannian Inertial Adaptive Gradient with Restart}
		\label{alg:RIAG-R}
		
		\KwParams{initial points $x_0,x_1\in\M$; 
			$\delta,\eta>0$, $\mu\in(0,1)$, $\varepsilon>0$}
		\KwInit{$G_0\leftarrow0$}
		
		\For{$n=1,2,3,\ldots$}{
			\eIf{$x_n\neq x_{n-1}$}{
				$\bar{\mu}_n\leftarrow
				\min\{\mu,\mu\,d(x_n,x_{n-1})\}$
			}{
				$\bar{\mu}_n\leftarrow\mu$
			}
			
			$d_n\leftarrow-\logmap_{x_n}(x_{n-1})$
			
			\eIf{$\inner{\grad f(x_n)}{d_n}>0$}{
				$\tilde{\mu}_n\leftarrow0$
			}{
				$\tilde{\mu}_n\leftarrow\bar{\mu}_n$
			}
			
			$w_n\leftarrow\expmap_{x_n}(\tilde{\mu}_n d_n)$
			
			$G_n\leftarrow G_{n-1}+\norm{\grad f(w_n)}^2$
			
			$\alpha_n\leftarrow\eta/(\sqrt{G_n}+\varepsilon)$
			
			$x_{n+1}\leftarrow
			\expmap_{w_n}(-\alpha_n\grad f(w_n))$
			
			\If{$\norm{\grad f(w_n)}<\delta$
				\textbf{ or } $d(x_{n+1},x_n)<\delta$}{
				\KwStop
			}
		}
		
		\KwOutput{sequence $\{x_n\}$}
		
	\end{algorithm}
	
	\begin{remark}[Interpretation of the three safeguards]
\begin{enumerate}[label=(\roman*),leftmargin=2em,itemsep=3pt]
\item \textbf{Adaptive inertial.} The coefficient $\bar\mu_n=\min\{\mu,\mu d(x_n,x_{n-1})\}$ automatically damps the extrapolation as consecutive iterates approach one another. In particular, $d(w_n,x_n)=\tilde\mu_n d(x_n,x_{n-1})\leq \mu d(x_n,x_{n-1})^2$, a second-order displacement estimate used later in the convergence analysis.
\item \textbf{Gradient-based restart.} The sign of $\inner{\grad f(x_n)}{d_n}_{x_n}$ tests whether the inherited momentum is locally aligned with ascent. If it is positive, RIAG-R sets $\tilde\mu_n=0$, hence $w_n=x_n$. This is the key safeguard that removes the first-order uphill contribution of the inertial step from the descent estimate.
\item \textbf{Adaptive gradient step.} Since $G_n$ is nondecreasing, $\alpha_n=\eta/(\sqrt{G_n}+\varepsilon)$ is positive and nonincreasing, with $\alpha_n\leq\eta/\varepsilon$. The algorithm therefore adapts the scalar learning rate using observed gradient magnitudes rather than an explicitly supplied Lipschitz constant.
\end{enumerate}
\end{remark}

\begin{figure}[t]
		\centering
		\begin{subfigure}[b]{0.48\textwidth}
			\centering
			\begin{tikzpicture}[scale=1.35,every node/.style={font=\scriptsize}]
				
				\manifoldpatch
				\node[darkblue] at (2.75,0.30) {$\mathcal{M}$};
				
				\coordinate (xnm1) at (-1.75,0.02);
				\coordinate (xn)   at (0.05,0.155);
				\coordinate (wn)   at (1.55,0.50);
				\coordinate (dtip)   at ($(xn)+(1.55,0.62)$);
				\coordinate (previewtip) at ($(xn)+(0.95,0.38)$);
				
				\fill[darkgrey] (xnm1) circle (1pt);
				\node[below=1pt] at (xnm1) {$x_{n-1}$};
				
				\fill[darkblue] (xn) circle (1.3pt);
				\node[below left=1pt and 2pt,darkblue] at (xn) {$x_n$};
				
				\draw[densely dashed,darkgrey,-{Latex[length=1.3mm]},shorten >=2pt]
				(xnm1) to[bend right=12] (xn);
				
				\draw[tanline,fill=tancol,fill opacity=0.55]
				($(xn)+(-0.55,-0.32)$) -- ($(xn)+(2.05,-0.02)$)
				-- ($(xn)+(2.30,1.03)$) -- ($(xn)+(-0.30,0.73)$) -- cycle;
				\node[tanline] at ($(xn)+(-0.15,0.98)$) {$T_{x_n}\mathcal{M}$};
				
				\draw[-{Latex[length=1.4mm]},thick,medblue] (xn) -- (dtip);
				\node[medblue,above right=0pt of dtip] {$d_n$};
				
				\draw[-{Latex[length=1.7mm]},very thick,darkblue] (xn) -- (previewtip);
				\node[darkblue,above=1pt] at ($(xn)+(0.40,0.16)$) {$\tilde\mu_n d_n$};
				\fill[darkgrey] (previewtip) circle (1pt);
				
				\draw[-{Latex[length=1.7mm]},thick,densely dotted,darkgrey]
				(previewtip) to[bend left=18] (wn);
				
				\fill[darkblue] (wn) circle (1.3pt);
				\node[below right=1pt,darkblue] at (wn) {$w_n$};
				
			\end{tikzpicture}
			\caption{Inertial extrapolation: $d_n=-\logmap_{x_n}(x_{n-1})$ is transported
				in $T_{x_n}\mathcal{M}$, scaled by $\tilde\mu_n$, and mapped back onto
				$\mathcal{M}$ by $\expmap_{x_n}$ to give $w_n$.}
			\label{fig:riag-concept-a}
		\end{subfigure}
		\hfill
		\begin{subfigure}[b]{0.48\textwidth}
			\centering
			\begin{tikzpicture}[scale=1.35,every node/.style={font=\scriptsize}]
				
				\manifoldpatch
				\node[darkblue] at (2.75,0.30) {$\mathcal{M}$};
				
				\coordinate (wn)   at (0.05,0.155);
				\coordinate (xnp1) at (1.65,0.40);
				\coordinate (gtip) at ($(wn)+(0.95,0.38)$);
				
				\fill[darkblue] (wn) circle (1.3pt);
				\node[above left=1pt and 2pt,darkblue] at (wn) {$w_n$};
				
				\draw[tanline,fill=tancol,fill opacity=0.55]
				($(wn)+(-0.55,-0.32)$) -- ($(wn)+(2.05,-0.02)$)
				-- ($(wn)+(2.30,1.03)$) -- ($(wn)+(-0.30,0.73)$) -- cycle;
				\node[tanline] at ($(wn)+(-0.15,0.98)$) {$T_{w_n}\mathcal{M}$};
				
				\draw[-{Latex[length=1.7mm]},very thick,darkblue] (wn) -- (gtip);
				\node[darkblue,above=1pt] at ($(wn)+(0.40,0.16)$) {$-\alpha_n\grad f(w_n)$};
				\fill[darkgrey] (gtip) circle (1pt);
				
				\draw[-{Latex[length=1.7mm]},thick,densely dotted,darkgrey]
				(gtip) to[bend left=15] (xnp1);
				
				\fill[darkblue] (xnp1) circle (1.3pt);
				\node[above right=1pt,darkblue] at (xnp1) {$x_{n+1}$};
				
				\node[align=left,draw=medblue,fill=boxbg,rounded corners,inner sep=3pt,
				anchor=north west,font=\scriptsize,text width=5.1cm] at (-2.6,-1.05)
				{\textbf{Restart safeguard} (applied while forming $d_n$ in
					Panel~(a)): if $\inner{\grad f(x_n)}{d_n}_{x_n}>0$, set
					$\tilde\mu_n=0$, so $w_n=x_n$ and Panel~(b) starts from a plain
					gradient step.};
				
			\end{tikzpicture}
			\caption{Gradient step from $w_n$: the negative Riemannian gradient in
				$T_{w_n}\mathcal{M}$ is mapped back onto $\mathcal{M}$ by $\expmap_{w_n}$ to
				give $x_{n+1}$. The restart safeguard (box) resets the momentum whenever it
				points uphill.}
			\label{fig:riag-concept-b}
		\end{subfigure}
		\caption{Concept of the RIAG-R update. Each RIAG-R step is first computed as a
			tangent vector and then mapped back onto $\mathcal{M}$ by the exponential
			map: Panel~(a) forms the momentum-extrapolated point $w_n$, and Panel~(b)
			takes the gradient step from $w_n$ to $x_{n+1}$, subject to the restart
			safeguard.}
		\label{fig:riag-concept}
	\end{figure}

	\subsection{Convergence Analysis}
	\label{sec:convergence}

		\begin{lemma}
			\label{lem:descent}
			Let Assumptions~\ref{ass:lipschitz}--\ref{ass:lower} hold.
			Let $\{x_n\}$ be generated by Algorithm~\ref{alg:RIAG-R}.
			Then for every $n\geq 1$,
			\begin{equation}
				f(x_{n+1})
				\;\leq\;
				f(x_n)
				- \alpha_n\!\left(1-\frac{L\alpha_n}{2}\right)\norm{\grad f(w_n)}^2
				+ \frac{L\tilde\mu_n^2}{2}\,d(x_n,x_{n-1})^2.
				\label{eq:descent}
			\end{equation}
		\end{lemma}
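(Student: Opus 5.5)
The plan is to apply the Riemannian Descent Lemma (Lemma~\ref{lem:descent-lemma}) twice, once for each half of the update. The first application covers the gradient step from $w_n$ to $x_{n+1}$. The second covers the inertial extrapolation from $x_n$ to $w_n$. The restart rule is then used to remove the first-order term produced by the second application.

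\emph{Step 1 (gradient step).} I take $x=w_n$ and $v=-\alpha_n\grad f(w_n)\in T_{w_n}\M$ in \eqref{eq:descent-lemma}. Since $x_{n+1}=\expmap_{w_n}(v)$, this gives
\[
f(x_{n+1})\le f(w_n)-\alpha_n\norm{\grad f(w_n)}^2+\frac{L\alpha_n^2}{2}\norm{\grad f(w_n)}^2
= f(w_n)-\alpha_n\Bigl(1-\frac{L\alpha_n}{2}\Bigr)\norm{\grad f(w_n)}^2 .
\]

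\emph{Step 2 (inertial step).} I take $x=x_n$ and $v=\tilde\mu_n d_n\in T_{x_n}\M$ in \eqref{eq:descent-lemma}. Since $w_n=\expmap_{x_n}(\tilde\mu_n d_n)$, this gives
\[
f(w_n)\le f(x_n)+\tilde\mu_n\inner{\grad f(x_n)}{d_n}_{x_n}+\frac{L\tilde\mu_n^2}{2}\norm{d_n}_{x_n}^2 .
\]
To rewrite the last term, I use $\norm{d_n}_{x_n}=\norm{\logmap_{x_n}(x_{n-1})}_{x_n}=d(x_n,x_{n-1})$, which is the defining property of the logarithmic map recorded in Section~\ref{sec:prelim}. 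In the degenerate case $x_n=x_{n-1}$ we have $d_n=0$ and both sides vanish.

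\emph{Step 3 (restart kills the cross term).} I split on the test in Algorithm~\ref{alg:RIAG-R}.
\begin{itemize}
\item If $\inner{\grad f(x_n)}{d_n}>0$, then $\tilde\mu_n=0$, so the cross term $\tilde\mu_n\inner{\grad f(x_n)}{d_n}$ is zero.
\item Otherwise $\inner{\grad f(x_n)}{d_n}\le0$ and $\tilde\mu_n=\bar\mu_n\ge0$. Here $\bar\mu_n\ge 0$ because $\mu\in(0,1)$ and distances are nonnegative. So the cross term is nonpositive.
\end{itemize}
In either case $\tilde\mu_n\inner{\grad f(x_n)}{d_n}\le0$. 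Chaining Steps 1 and 2 and dropping this term yields exactly \eqref{eq:descent}.

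The computation itself is routine. The only delicate point is geometric: the argument needs $\logmap_{x_n}(x_{n-1})$ to be well defined with norm equal to $d(x_n,x_{n-1})$. This requires $x_{n-1}$ to lie within the injectivity radius of $x_n$, or at least that $d_n$ is chosen as the velocity of a minimizing geodesic. I will state this as the standing interpretation of $\logmap$, consistent with its definition in Section~\ref{sec:prelim}.

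It is also worth noting how Assumption~\ref{ass:lipschitz} enters. Lemma~\ref{lem:descent-lemma} is quoted for arbitrary $v$, so it applies to both steps without any smallness condition on $\alpha_n$ or $\tilde\mu_n$. Assumption~\ref{ass:lower} is not actually needed for this one-step inequality.
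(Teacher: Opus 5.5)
Your proposal is correct and follows essentially the same route as the paper: the descent lemma at $w_n$ for the gradient step, the descent lemma at $x_n$ for the extrapolation, and the restart test to make the cross term $\tilde\mu_n\inner{\grad f(x_n)}{d_n}$ nonpositive. The paper handles the restart case by noting $w_n=x_n$, which is equivalent to your observation that the cross term vanishes, and your side remarks (that Assumption~\ref{ass:lower} is unused and that $\norm{d_n}=d(x_n,x_{n-1})$ presupposes a well-defined minimizing logarithm) are accurate points the paper leaves implicit.
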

	
	\begin{proof}
		Applying Lemma~\ref{lem:descent-lemma} at $w_n$ with $v=-\alpha_n\grad f(w_n)$, then we have
		\begin{align}
			f(x_{n+1})
			&= f\bigl(\expmap_{w_n}(-\alpha_n\grad f(w_n))\bigr) \notag\\
			&\leq f(w_n)
			+ \inner{\grad f(w_n)}{-\alpha_n\grad f(w_n)}
			+ \frac{L}{2}\alpha_n^2\norm{\grad f(w_n)}^2 \notag\\
			&= f(w_n) - \alpha_n\!\left(1-\frac{L\alpha_n}{2}\right)\norm{\grad f(w_n)}^2.
			\label{eq:gradient-step}
		\end{align}
		When $\tilde\mu_n>0$, the restart check ensures
		$\inner{\grad f(x_n)}{d_n}\leq 0$ (otherwise restart would have
		set $\tilde\mu_n=0$).
		Applying Lemma~\ref{lem:descent-lemma} at $x_n$ with $v=\tilde\mu_n\,d_n$, we get
		\begin{align}
			f(w_n)
			&= f\bigl(\expmap_{x_n}(\tilde\mu_n\,d_n)\bigr) \notag\\
			&\leq f(x_n)
			+ \tilde\mu_n\inner{\grad f(x_n)}{d_n}
			+ \frac{L\tilde\mu_n^2}{2}\norm{d_n}^2 \notag\\
			&\leq f(x_n) + \frac{L\tilde\mu_n^2}{2}\,d(x_n,x_{n-1})^2,
			\label{eq:inertial-step}
		\end{align}
		where we used $\inner{\grad f(x_n)}{d_n}\leq0$ and
		$\norm{d_n}=\norm{\logmap_{x_n}(x_{n-1})}=d(x_n,x_{n-1})$.
		When $\tilde\mu_n=0$, $w_n=x_n$ so~\eqref{eq:inertial-step} holds
		trivially with equality $f(w_n)=f(x_n)$.
		Substituting~\eqref{eq:inertial-step} into~\eqref{eq:gradient-step} gives
		the stated inequality~\eqref{eq:descent}.
	\end{proof}
	
	\begin{remark}
		\label{cor:rgd-special-case}
		If the momentum parameter is set to $\mu=0$, then Algorithm~\ref{alg:RIAG-R}
		reduces to Riemannian gradient descent with the AdaGrad-type step-size
		$\alpha_n=\eta/(\sqrt{G_n}+\varepsilon)$.
		In this case, inequality~\eqref{eq:descent} becomes
		\[
		f(x_{n+1}) \;\leq\; f(x_n)
		- \alpha_n\!\left(1-\frac{L\alpha_n}{2}\right)\norm{\grad f(x_n)}^2.
		\]
		In the sequel, we define the constant where $C_\mu\defeq\frac{L}{2}\sum_{n=1}^\infty\tilde\mu_n^2\,d(x_n,
		x_{n-1})^2$ which will be useful in the forth coming results.  We also define the
		constants
		\begin{equation}
			\beta \;\defeq\; \frac{2L\mu^2}{1-\mu^2}, \qquad
			\kappa_\mu \;\defeq\; \beta\cdot\frac{1+\mu^2}{1-\mu^2}
			\;=\; \frac{2L\mu^2(1+\mu^2)}{(1-\mu^2)^2}, \qquad
			L' \;\defeq\; \frac{L}{2}+\kappa_\mu.
			\label{eq:lyap-constants}
		\end{equation} 
	\end{remark}

	\begin{lemma}
		\label{lem:summability}
		Let Assumptions~\ref{ass:lipschitz}--\ref{ass:lower} hold and suppose 	$\eta/\varepsilon\leq1/(2L')$, then
		 for any $N\geq1$,
		\begin{equation}
			\sum_{n=1}^N \alpha_n\!\left(1-\frac{L\alpha_n}{2}\right)
			\norm{\grad f(w_n)}^2
			\;\leq\;
			f(x_1) - f_* + \frac{L}{2}\sum_{n=1}^N\tilde\mu_n^2\,d(x_n,x_{n-1})^2.
			\label{eq:telescope}
		\end{equation}
		Consequently, $\sum_{n=1}^\infty d(x_n,x_{n-1})^2<\infty$ and $\sum_{n=1}^\infty\alpha_n\norm{\grad f(w_n)}^2<\infty$. Hence $d(x_n,x_{n-1})\to0$. If, in addition, $\sum_n\alpha_n=\infty$, then $\liminf_{n\to\infty}\norm{\grad f(w_n)}=0$; if $\inf_n\alpha_n>0$, then in fact $\norm{\grad f(w_n)}\to0$.
	\end{lemma}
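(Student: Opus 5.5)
The plan is to prove the telescoped inequality \eqref{eq:telescope} directly from Lemma~\ref{lem:descent}, and to obtain the summability claims from a Lyapunov function $\Phi_n \defeq f(x_n)+\beta\, d(x_n,x_{n-1})^2$, with $\beta$ as in \eqref{eq:lyap-constants}. Throughout write $g_n\defeq\norm{\grad f(w_n)}$ and $D_n\defeq d(x_n,x_{n-1})$.

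\textbf{Step 1 (telescoping).} Sum \eqref{eq:descent} over $n=1,\dots,N$. The $f$-terms telescope to $f(x_1)-f(x_{N+1})$. Using $f(x_{N+1})\ge f_*$ from Assumption~\ref{ass:lower} gives \eqref{eq:telescope} immediately.

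This alone does not yet control the right-hand side. The difficulty is that $\sum_n\tilde\mu_n^2D_n^2$ must be shown finite, and $D_n$ is itself driven by the gradient steps. That circularity is the main obstacle, and the Lyapunov function is designed to break it.

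\textbf{Step 2 (displacement recursion).} By the triangle inequality,
\[
D_{n+1}\le d(x_{n+1},w_n)+d(w_n,x_n)=\alpha_n g_n+\tilde\mu_n D_n\le \alpha_n g_n+\mu D_n .
\]
Square this with \eqref{eq:young-square}, taking $\theta$ such that $1+\theta=\frac{1+\mu^2}{1-\mu^2}$. Then $1+\frac1\theta=\frac{1+\mu^2}{2\mu^2}$, and we get
\[
D_{n+1}^2\le \frac{1+\mu^2}{1-\mu^2}\,\alpha_n^2g_n^2+\frac{1+\mu^2}{2}\,D_n^2 .
\]
The contraction factor $\tfrac{1+\mu^2}{2}$ is strictly less than $1$ because $\mu\in(0,1)$.

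\textbf{Step 3 (Lyapunov descent).} Add $\beta$ times the Step~2 bound to \eqref{eq:descent}, using $\tilde\mu_n\le\mu$.

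The coefficient of $g_n^2$ becomes $-\alpha_n+\alpha_n^2\bigl(\tfrac L2+\kappa_\mu\bigr)=-\alpha_n(1-L'\alpha_n)$. Since $\alpha_n\le\eta/\varepsilon\le 1/(2L')$, this is at most $-\alpha_n/2$.

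The coefficient of $D_n^2$ is $\tfrac{L\mu^2}{2}+\beta\tfrac{1+\mu^2}{2}$. Because $\beta\tfrac{1-\mu^2}{2}=L\mu^2$, this equals $\beta-\tfrac{L\mu^2}{2}$. Hence
\[
\Phi_{n+1}\le\Phi_n-\frac{\alpha_n}{2}g_n^2-\frac{L\mu^2}{2}D_n^2 .
\]

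\textbf{Step 4 (summability).} Telescope the Step~3 inequality and use $\Phi_{N+1}\ge f_*$. This yields
\[
\sum_n\alpha_n g_n^2\le 2(\Phi_1-f_*)<\infty
\qquad\text{and}\qquad
\sum_n D_n^2\le \frac{2(\Phi_1-f_*)}{L\mu^2}<\infty .
\]
The same conclusion for $\sum_n D_n^2$ holds with no dependence on $\mu$. Summing the Step~2 recursion and using $\alpha_n^2g_n^2\le(\eta/\varepsilon)\alpha_ng_n^2$, the finite sum $\sum_n\alpha_n g_n^2$ together with the contraction $\tfrac{1+\mu^2}{2}<1$ gives $\sum_n D_n^2<\infty$. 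In particular $D_n\to0$, and $C_\mu<\infty$.

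\textbf{Step 5 (stationarity).} If $\sum_n\alpha_n=\infty$ while $\sum_n\alpha_ng_n^2<\infty$, then $\liminf_n g_n=0$; otherwise $g_n^2\ge c>0$ eventually and the weighted sum diverges. If $\inf_n\alpha_n=\underline\alpha>0$, then $\sum_n g_n^2\le\underline\alpha^{-1}\sum_n\alpha_ng_n^2<\infty$, so $g_n\to0$.
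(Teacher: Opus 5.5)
Your proposal is correct and follows essentially the same route as the paper. The paper telescopes \eqref{eq:descent} to get \eqref{eq:telescope}, then uses the Lyapunov function $E_n=(f(x_n)-f_*)+\beta\,d(x_n,x_{n-1})^2$ (your $\Phi_n$ shifted by $f_*$), the same triangle-inequality and Young step with $\theta=2\mu^2/(1-\mu^2)$, the same per-step decrease $\frac{L\mu^2}{2}d(x_n,x_{n-1})^2+\frac{\alpha_n}{2}\norm{\grad f(w_n)}^2$, and the same closing arguments for the $\liminf$ and full-sequence conclusions; your extra $\mu$-independent derivation of $\sum_n d(x_n,x_{n-1})^2<\infty$ from the contraction factor $\tfrac{1+\mu^2}{2}<1$ is a valid addition the paper does not use.
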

	
	\begin{proof}
		Rearranging~\eqref{eq:descent} gives, for every $n\geq1$,
		\begin{equation}
			\alpha_n\!\left(1-\frac{L\alpha_n}{2}\right)
			\norm{\grad f(w_n)}^2
			\;\leq\;
			f(x_n) - f(x_{n+1})
			+ \frac{L}{2}\,\tilde\mu_n^2\,d(x_n,x_{n-1})^2.
			\label{eq:descent-rearranged}
		\end{equation}
		Summing~\eqref{eq:descent-rearranged} from $n=1$ to $N$ and telescoping
		using the fact that
		$f(x_{N+1})\geq f_*$, we obtain the desired inequality in~\eqref{eq:telescope}.
		Now for $n\geq1$, let
		\begin{equation}
			E_n \;\defeq\; \bigl(f(x_n)-f_*\bigr) + \beta\,d(x_n,x_{n-1})^2 \;\geq\;0.
			\label{eq:lyapunov}
		\end{equation}
		Since $x_{n+1}=\expmap_{w_n}(-\alpha_n\grad f(w_n))$ and
		$w_n=\expmap_{x_n}(\tilde\mu_n d_n)$, the isometry property
		$\norm{\logmap_x(y)}=d(x,y)$ gives $d(x_{n+1},w_n)=\alpha_n\norm{\grad f(w_n)}$
		and $d(w_n,x_n)=\tilde\mu_n\norm{d_n}=\tilde\mu_n\,d(x_n,x_{n-1})$, so the
		triangle inequality together with $\tilde\mu_n\leq\bar\mu_n\leq\mu$ yields
		\begin{align*}
			d(x_{n+1},x_n) &\;\leq\; d(x_{n+1},w_n)+d(w_n,x_n) \\
			&\;=\; \alpha_n\norm{\grad f(w_n)} + \tilde\mu_n\,d(x_n,x_{n-1})
			\;\leq\; \alpha_n\norm{\grad f(w_n)} + \mu\,d(x_n,x_{n-1}).
		\end{align*}
		Applying~\eqref{eq:young-square} with
		$a=\alpha_n\norm{\grad f(w_n)}_{w_n}$, $b=\mu\,d(x_n,x_{n-1})$, and
		$\theta=2\mu^2/(1-\mu^2)$ (so that $1+\theta=(1+\mu^2)/(1-\mu^2)$ and
		$\mu^2(1+1/\theta)=(1+\mu^2)/2$), we get
		\begin{equation}
			d(x_{n+1},x_n)^2
			\;\leq\;
			\frac{1+\mu^2}{1-\mu^2}\,\alpha_n^2\norm{\grad f(w_n)}^2
			+ \frac{1+\mu^2}{2}\,d(x_n,x_{n-1})^2.
			\label{eq:disp-young}
		\end{equation}
		Combining~\eqref{eq:descent}, $\tilde\mu_n^2\leq\mu^2$, and~\eqref{eq:disp-young}, we get:
		\begin{align}
			E_{n+1}
			&= \bigl(f(x_{n+1})-f_*\bigr) + \beta\,d(x_{n+1},x_n)^2 \notag\\
			&\leq \bigl(f(x_n)-f_*\bigr)
			- \alpha_n\!\left(1-\frac{L\alpha_n}{2}\right)\norm{\grad f(w_n)}^2
			+ \frac{L\mu^2}{2}d(x_n,x_{n-1})^2 \notag\\
			&\qquad
			+ \beta\,\frac{1+\mu^2}{1-\mu^2}\,\alpha_n^2\norm{\grad f(w_n)}^2
			+ \beta\,\frac{1+\mu^2}{2}\,d(x_n,x_{n-1})^2. \notag
		\end{align}
		Hence
		\begin{equation}
			E_{n+1}
			\;\leq\;
			E_n \;-\; \frac{L\mu^2}{2}\,d(x_n,x_{n-1})^2
			\;-\; \alpha_n\bigl(1-L'\alpha_n\bigr)\norm{\grad f(w_n)}^2.
			\label{eq:lyap-recursion}
		\end{equation}
		Clearly $\alpha_n\leq1/(2L')$, so $1-L'\alpha_n\geq1/2$, and
		\begin{equation}
			E_{n+1} \;\leq\; E_n \;-\; \frac{L\mu^2}{2}\,d(x_n,x_{n-1})^2
			\;-\; \frac{\alpha_n}{2}\,\norm{\grad f(w_n)}^2.
			\label{eq:lyap-final}
		\end{equation}
		Summing~\eqref{eq:lyap-final} from $n=1$ to $N$ and using
		$E_{N+1}\geq0$, we obtain
		\begin{align*}
			\frac{L\mu^2}{2}\sum_{n=1}^N d(x_n,x_{n-1})^2
			+ \frac12\sum_{n=1}^N \alpha_n\norm{\grad f(w_n)}^2
			&\;\leq\; E_{n_1} - E_{N+1} \;\leq\; E_{n_1} \;<\;\infty \\
			&\qquad\text{for all }N\geq1.
		\end{align*}
		Letting $N\to\infty$ gives
		\[
		\sum_{n=1}^\infty d(x_n,x_{n-1})^2<\infty,
		\qquad
		\sum_{n=1}^\infty \alpha_n\norm{\grad f(w_n)}^2<\infty.
		\]
		Therefore $d(x_n,x_{n-1})\to0$, and
		$C_\mu\leq\tfrac{L\mu^2}{2}\sum_n d(x_n,x_{n-1})^2<\infty$.
		If $\sum_n\alpha_n=\infty$ and the gradient norms were bounded below by some $\delta>0$ eventually, then
		$\sum_n\alpha_n\norm{\grad f(w_n)}^2\geq\delta^2\sum_n\alpha_n=\infty$, a contradiction. Thus $\liminf_n\norm{\grad f(w_n)}=0$. Finally, if $\underline\alpha:=\inf_n\alpha_n>0$, then
		\[
		\underline\alpha\sum_{n=1}^\infty\norm{\grad f(w_n)}^2
		\leq \sum_{n=1}^\infty\alpha_n\norm{\grad f(w_n)}^2<\infty,
		\]
		which implies $\norm{\grad f(w_n)}\to0$.

	\end{proof}
	
	\begin{theorem}[Iteration complexity]
		\label{thm:complexity}
		Under Assumptions~\ref{ass:lipschitz}--\ref{ass:lower}, suppose $\underline\alpha:=\inf_n\alpha_n>0$ and $\alpha_n\leq\bar\alpha<1/L$ for all $n$. Let $C=f(x_1)-f_*+C_\mu>0$.
		Then for every $N\geq1$,
		\begin{equation}
			\min_{1\leq k\leq N}\norm{\grad f(w_k)}_{w_k}^2
			\;\leq\;
			\frac{C}{\underline\alpha(1-L\bar\alpha/2)\,N},
		\end{equation}
		so RIAG-R produces an iterate with $\norm{\grad f(w_k)}\leq\varepsilon$
		within $N=O(\varepsilon^{-2})$ iterations.
	\end{theorem}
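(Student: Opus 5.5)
The plan is to start from the telescoped inequality~\eqref{eq:telescope} of Lemma~\ref{lem:summability}, which follows only from the one-step descent estimate of Lemma~\ref{lem:descent} and the lower bound $f_*$. That lemma carries the step-size hypothesis $\eta/\varepsilon\le 1/(2L')$, but the telescoping part does not need it. Indeed, rearranging~\eqref{eq:descent} and summing from $n=1$ to $N$ uses only Assumptions~\ref{ass:lipschitz}--\ref{ass:lower}. So I will re-derive that line directly rather than invoke the lemma's hypothesis. This gives
\[
\sum_{n=1}^N \alpha_n\Bigl(1-\frac{L\alpha_n}{2}\Bigr)\norm{\grad f(w_n)}^2 \;\le\; f(x_1)-f_*+\frac{L}{2}\sum_{n=1}^N\tilde\mu_n^2\,d(x_n,x_{n-1})^2 .
\]
The last sum is a partial sum of nonnegative terms, so it is bounded by $C_\mu$. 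Hence the right-hand side is at most $C$.

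Next I lower-bound each coefficient uniformly. Since $0<\alpha_n\le\bar\alpha<1/L$, we have $1-L\alpha_n/2\ge 1-L\bar\alpha/2>1/2>0$, and $\alpha_n\ge\underline\alpha$. So every coefficient is at least $\underline\alpha(1-L\bar\alpha/2)>0$. Bounding each $\norm{\grad f(w_n)}^2$ below by $\min_{1\le k\le N}\norm{\grad f(w_k)}^2$ gives
\[
N\,\underline\alpha\Bigl(1-\frac{L\bar\alpha}{2}\Bigr)\min_{1\le k\le N}\norm{\grad f(w_k)}^2\le C .
\]
Dividing by $N\underline\alpha(1-L\bar\alpha/2)$ yields the displayed bound. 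For the complexity statement, it suffices that $N\ge C/(\underline\alpha(1-L\bar\alpha/2)\varepsilon^2)$. Then some $k\le N$ has $\norm{\grad f(w_k)}\le\varepsilon$, which is $N=O(\varepsilon^{-2})$.

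The main obstacle is ensuring that $C$ is finite, i.e.\ $C_\mu<\infty$; otherwise the bound is vacuous. I would first try to use the summability conclusion of Lemma~\ref{lem:summability}, $\sum_n d(x_n,x_{n-1})^2<\infty$, together with $\tilde\mu_n\le\mu$. That route, however, needs $\eta/\varepsilon\le 1/(2L')$, which is not among the theorem's hypotheses.

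If I cannot assume that, the fallback is the adaptive inertial rule. By construction $\tilde\mu_n\le\bar\mu_n\le\mu\,d(x_n,x_{n-1})$, so each summand is at most $\mu^2 d(x_n,x_{n-1})^4$. This reduces finiteness to boundedness of the displacements. Alternatively, I would simply note that $C$ is defined in the statement as a (finite) constant. In that reading, the finite-$N$ bound holds with $C$ replaced by $f(x_1)-f_*+\frac{L}{2}\sum_{n\le N}\tilde\mu_n^2d(x_n,x_{n-1})^2\le C$. I would then remark that finiteness of $C_\mu$ is guaranteed under the step-size condition of Lemma~\ref{lem:summability}.
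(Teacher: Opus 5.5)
Your proposal is correct and follows the paper's proof essentially step for step: you use the telescoped descent bound of Lemma~\ref{lem:summability}, whose derivation, as you note, does not use $\eta/\varepsilon\le 1/(2L')$. You then bound the partial inertial sum by $C_\mu$, use the uniform coefficient bound $\alpha_n(1-L\alpha_n/2)\ge\underline\alpha(1-L\bar\alpha/2)$, and finish with min $\le$ average. The paper simply treats $C$ as a finite constant, as in your last reading. Your fallback does not work as stated, since bounded displacements do not make $\sum_n d(x_n,x_{n-1})^4$ finite. Finiteness of $C_\mu$ nevertheless follows from the theorem's own hypotheses: $\inf_n\alpha_n>0$ forces $\sup_n G_n<\infty$, hence $\sum_n\alpha_n^2\norm{\grad f(w_n)}^2<\infty$, and the recursion $d(x_{n+1},x_n)\le\alpha_n\norm{\grad f(w_n)}+\mu\,d(x_n,x_{n-1})$ with $\mu<1$ then gives $\sum_n d(x_n,x_{n-1})^2<\infty$.
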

	
	\begin{proof}
		Since $\alpha_n\geq\underline\alpha>0$ and $\alpha_n\leq\bar\alpha<1/L$, we have
		\begin{equation}
			\alpha_n\!\left(1-\frac{L\alpha_n}{2}\right)
			\geq \underline\alpha\!\left(1-\frac{L\bar\alpha}{2}\right)>0.
			\label{eq:phi-monotone}
		\end{equation}
		From Lemma~\ref{lem:summability}, we get
		\begin{equation}
			\sum_{n=1}^N\alpha_n\!\left(1-\frac{L\alpha_n}{2}\right)
			\norm{\grad f(w_n)}^2
			\;\leq\;
			f(x_1)-f_*+\frac{L}{2}\sum_{n=1}^N\tilde\mu_n^2\,d(x_n,x_{n-1})^2
			\;\leq\; C,
			\label{eq:telescope-recalled}
		\end{equation}
		where the final bound uses
		$C\defeq f(x_1)-f_*+C_\mu\geq f(x_1)-f_*+\frac{L}{2}\sum_{n=1}^N
		\tilde\mu_n^2\,d(x_n,x_{n-1})^2$ for every $N\geq1$.
		Now applying~\eqref{eq:phi-monotone} in \eqref{eq:telescope-recalled}, we get 
		\begin{equation}
			\underline\alpha\!\left(1-\frac{L\bar\alpha}{2}\right)
			\sum_{n=1}^N\norm{\grad f(w_n)}^2
			\;\leq\;
			\sum_{n=1}^N\alpha_n\!\left(1-\frac{L\alpha_n}{2}\right)
			\norm{\grad f(w_n)}^2
			\;\leq\; C.
			\label{eq:key-chain}
		\end{equation}
		Dividing both sides of~\eqref{eq:key-chain} by the positive constant
		$\underline\alpha(1-L\bar\alpha/2)\cdot N$, we obtain:
		\begin{equation}
			\frac{1}{N}\sum_{n=1}^N\norm{\grad f(w_n)}^2
			\;\leq\;
			\frac{C}{\underline\alpha(1-L\bar\alpha/2)\cdot N}.
			\label{eq:average-bound}
		\end{equation}
		Hence
		\begin{equation}
			\min_{1\leq k\leq N}\norm{\grad f(w_k)}^2
			\;\leq\;
			\frac{C}{\underline\alpha(1-L\bar\alpha/2)\cdot N}.
			\label{eq:min-bound}
		\end{equation}
		Setting
		\[
		\frac{C}{\underline\alpha(1-L\bar\alpha/2)\cdot N} \;\leq\; \varepsilon^2
		\]
		then
		\[
		N \;\geq\; \frac{C}{\underline\alpha(1-L\bar\alpha/2)\,\varepsilon^2}.
		\]
		Writing $K_{\alpha}\defeq C/[\underline\alpha(1-L\bar\alpha/2)]$, the complexity is:
		\[
		N \;\geq\; \frac{K_\alpha}{\varepsilon^2},
		\]
		which is $O(\varepsilon^{-2}).$
	\end{proof}
	
	\begin{theorem}[Cluster points are critical]
		\label{thm:cluster}
		Let Assumptions~\ref{ass:lipschitz}--\ref{ass:lower} hold and suppose $\inf_n\alpha_n>0$.
		If $\{x_n\}$ generated by Algorithm~\ref{alg:RIAG-R} is bounded in $\M$,
		then every cluster point of $\{x_n\}$ is a critical point of $f$.
	\end{theorem}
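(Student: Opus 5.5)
We take a cluster point $\bar x$ of $\{x_n\}$ and a subsequence $x_{n_j}\to\bar x$, and show that $\grad f(\bar x)=0$. The argument transfers the stationarity already known along the extrapolated points $w_n$ to the iterates $x_n$ themselves. Lemma~\ref{lem:summability}, together with $\inf_n\alpha_n>0$, gives two facts: $\norm{\grad f(w_n)}\to0$ and $d(x_n,x_{n-1})\to0$. Strictly, Lemma~\ref{lem:summability} also assumes $\eta/\varepsilon\le 1/(2L')$. We take this step-size condition to be in force, as it is implicit in the standing regime of Section~\ref{sec:convergence}.

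\textbf{Step 1 (closeness of $w_n$ and $x_n$).} As in the proof of Lemma~\ref{lem:summability}, $d(w_n,x_n)=\tilde\mu_n d(x_n,x_{n-1})\le\mu\, d(x_n,x_{n-1})$. The right-hand side tends to $0$, so $w_{n_j}\to\bar x$ as well. By the adaptive-inertia remark, the bound is even of order $d(x_n,x_{n-1})^2$, but the first-order bound is enough here.

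\textbf{Step 2 (passing to the limit).} We would use $L$-Lipschitz continuity of $\grad f$ (Assumption~\ref{ass:lipschitz}) along a minimizing geodesic from $w_{n_j}$ to $\bar x$. Since parallel transport is an isometry,
\[
\norm{\grad f(\bar x)}_{\bar x}\le \norm{\mathcal P\,\grad f(w_{n_j})-\grad f(\bar x)}_{\bar x}+\norm{\grad f(w_{n_j})}_{w_{n_j}}\le L\,d(w_{n_j},\bar x)+\norm{\grad f(w_{n_j})}_{w_{n_j}} .
\]
Both terms on the right tend to $0$, so $\grad f(\bar x)=0$. As a byproduct, the same inequality with $x_{n_j}$ in place of $\bar x$ gives $\norm{\grad f(x_n)}\le L\,d(x_n,w_n)+\norm{\grad f(w_n)}\to0$ along the whole sequence.

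\textbf{Main obstacle.} The only delicate point is the existence of minimizing geodesics between the points being compared, which Definition~\ref{def:lipschitz} requires. Completeness of $\M$ and Hopf--Rinow settle this. Boundedness of $\{x_n\}$ gives precompactness, hence existence of cluster points, and keeps all $w_n$ in a compact set, since $d(w_n,x_n)\to0$. If one prefers to avoid parallel transport, an alternative is to argue by continuity of the gradient norm $x\mapsto\norm{\grad f(x)}_x$, which holds because $f$ is $C^1$. Then $\norm{\grad f(w_{n_j})}\to\norm{\grad f(\bar x)}$ directly, and the conclusion follows since the left-hand side tends to $0$.
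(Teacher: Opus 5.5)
Your proof is correct and is essentially the paper's argument. Both take $d(x_n,x_{n-1})\to0$ and $\norm{\grad f(w_n)}\to0$ from Lemma~\ref{lem:summability}, deduce $w_{n_j}\to\bar x$ from $d(w_n,x_n)\to0$, and conclude by continuity of the gradient field, which is exactly your stated alternative to the Lipschitz/parallel-transport estimate (the paper uses the sharper bound $d(w_n,x_n)\le\mu\,d(x_n,x_{n-1})^2$, but your first-order bound suffices). You are also right that Lemma~\ref{lem:summability} needs $\eta/\varepsilon\le1/(2L')$; this hypothesis is absent from the theorem statement, and the paper's proof uses it silently.
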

	
	\begin{proof}
		Let $x^*$ be any cluster point of $\{x_n\}$, so there exists a
		subsequence $x_{n_j}\to x^*$ as $j\to\infty$. From the algorithm
		\[
		d(w_n,x_n) \;=\; \norm{\tilde\mu_n d_n}_{x_n}
		\;=\; \tilde\mu_n\norm{d_n}_{x_n}
		\;=\; \tilde\mu_n\,d(x_n,x_{n-1}).
		\]
		Since $\tilde\mu_n\leq\bar\mu_n\leq\mu\cdot d(x_n,x_{n-1})$, then:
		\begin{equation}
			d(w_n,x_n) \;\leq\; \mu\cdot d(x_n,x_{n-1})^2.
			\label{eq:wn-xn}
		\end{equation}
		From Lemma~\ref{lem:summability}, then $\sum_n d(x_n,x_{n-1})^2<\infty$, so
		the terms go to zero: $d(x_n,x_{n-1})\to0$.
		Hence $d(w_n,x_n)\to0$, also
		\[
		d(w_{n_j},x^*)
		\;\leq\; d(w_{n_j},x_{n_j}) + d(x_{n_j},x^*)
		\;\to\; 0,
		\]
		confirming $w_{n_j}\to x^*$.
		
		By Lemma~\ref{lem:summability} and $\inf_n\alpha_n>0$, we have $\norm{\grad f(w_n)}\to0$ for the full sequence. In particular, $\norm{\grad f(w_{n_j})}\to0$. Since $w_{n_j}\to x^*$ and the Riemannian gradient field is continuous,
		\[
		\norm{\grad f(x^*)}=\lim_{j\to\infty}\norm{\grad f(w_{n_j})}=0.
		\]
		Hence $\grad f(x^*)=0$, i.e., $x^*$ is a critical point. This completes the proof.
	\end{proof}

	\begin{mdframed}[style=remarkstyle]
		\begin{proposition}[Linear convergence under the Riemannian Polyak-\L ojasiewicz inequality]
			\label{cor:linear-rate}
			Suppose Assumptions~\ref{ass:lipschitz}--\ref{ass:lower} hold and that $f$ satisfies the Riemannian Polyak--\L ojasiewicz inequality
			\[
			\norm{\grad f(x)}^2 \geq 2m\bigl(f(x)-f_*\bigr),\qquad x\in\M,
			\]
			for some $m>0$. Let $\alpha:=\inf_n\alpha_n(1-L\alpha_n/2)>0$, set $D:=\sum_{n=1}^\infty d(x_n,x_{n-1})^2<\infty$, and suppose $\alpha m<1$.
			Then the iterates of Algorithm~\ref{alg:RIAG-R} satisfy, for all $n\geq1$,
			\begin{equation}
				f(x_n) - f_*
				\;\leq\;
				(1-\alpha m)^{n-1}\bigl(f(x_1) - f_*\bigr)
				\;+\; L\mu^2\Bigl(\alpha L+\tfrac12\Bigr)D.
				\label{eq:linear-rate}
			\end{equation}
		\end{proposition}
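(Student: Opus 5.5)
The plan is to turn the one-step descent inequality of Lemma~\ref{lem:descent} into a perturbed linear recursion for the gap $e_n\defeq f(x_n)-f_*$:
\[
e_{n+1}\le(1-\alpha m)\,e_n+r_n,\qquad r_n\defeq c\,d(x_n,x_{n-1})^2,
\]
with an explicit constant $c$, and then to unroll it.

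\textbf{Step 1 (starting point).} From \eqref{eq:descent}, the definition of $\alpha$, and $\tilde\mu_n\le\mu$,
\[
e_{n+1}\le e_n-\alpha\norm{\grad f(w_n)}^2+\tfrac{L\mu^2}{2}\,d(x_n,x_{n-1})^2 .
\]

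\textbf{Step 2 (split the gradient term and apply PL at $w_n$).} The difficulty is that the Riemannian PL inequality controls $f(w_n)-f_*$, not $e_n$. I write
\[
-\alpha\norm{\grad f(w_n)}^2=-\tfrac{\alpha}{2}\norm{\grad f(w_n)}^2-\tfrac{\alpha}{2}\norm{\grad f(w_n)}^2 ,
\]
and apply PL to the first half only, which gives $-\tfrac{\alpha}{2}\norm{\grad f(w_n)}^2\le-\alpha m\bigl(f(w_n)-f_*\bigr)$. The second half is kept as a reserve for absorbing cross terms.

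\textbf{Step 3 (transfer from $w_n$ back to $x_n$).} Apply Lemma~\ref{lem:descent-lemma} at $w_n$ with $v=\logmap_{w_n}(x_n)$. By Step~2 of the Lemma~\ref{lem:summability} proof, $\norm{v}=d(w_n,x_n)=\tilde\mu_n d(x_n,x_{n-1})\le\mu\, d(x_n,x_{n-1})$. This yields
\[
f(w_n)\ge f(x_n)-\norm{\grad f(w_n)}\,\mu\, d(x_n,x_{n-1})-\tfrac{L\mu^2}{2}\,d(x_n,x_{n-1})^2 ,
\]
and therefore
\[
-\alpha m\bigl(f(w_n)-f_*\bigr)\le-\alpha m\,e_n+\alpha m\mu\norm{\grad f(w_n)}\,d(x_n,x_{n-1})+\tfrac{\alpha mL\mu^2}{2}\,d(x_n,x_{n-1})^2 .
\]

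\textbf{Step 4 (absorb the cross term).} Young's inequality \eqref{eq:young-scalar} gives
\[
\alpha m\mu\norm{\grad f(w_n)}\,d(x_n,x_{n-1})\le\tfrac{\alpha}{2}\norm{\grad f(w_n)}^2+\tfrac{\alpha m^2\mu^2}{2}\,d(x_n,x_{n-1})^2 ,
\]
and the gradient term here is cancelled by the reserved half from Step~2. To clean up the remaining coefficients I use two facts: $\alpha m<1$, and $m\le L$, which holds because PL and $L$-smoothness are compatible only when $m\le L$ (this follows by applying Lemma~\ref{lem:descent-lemma} with $v=-\grad f(x)/L$ together with $f\ge f_*$). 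All $d(x_n,x_{n-1})^2$ terms then collect into $r_n=c\,d(x_n,x_{n-1})^2$ with $c$ of the form $L\mu^2(\text{const}\cdot\alpha L+\text{const})$.

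\textbf{Main obstacle.} The expected hardest part is this bookkeeping. The plan is to tune the split in Step~2, or the Young parameter in Step~4, so that the constant lands exactly on $L\mu^2(\alpha L+\tfrac12)$. A first unoptimized pass gives something like $L\mu^2(1+\alpha L/2)$, so some care is needed. For instance, I would bound $\tfrac{\alpha mL\mu^2}{2}\le\tfrac{L\mu^2}{2}$ only partially, or use PL with weight $\alpha m$ on the full gradient term and handle the cross term via \eqref{eq:young-square}.

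\textbf{Step 5 (unroll).} Once the recursion holds, induction gives
\[
e_n\le(1-\alpha m)^{n-1}e_1+\sum_{k=1}^{n-1}(1-\alpha m)^{n-1-k}r_k .
\]
Since $0<1-\alpha m<1$, every weight is at most $1$, so the sum is bounded by $c\sum_k d(x_k,x_{k-1})^2\le c\,D$. Finiteness of $D$ is taken from Lemma~\ref{lem:summability}, which assumes the step-size condition $\eta/\varepsilon\le1/(2L')$; I treat that condition as implicit in the statement's hypothesis $D<\infty$. This gives \eqref{eq:linear-rate}.
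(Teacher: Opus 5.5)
Your proof is correct, but it takes a different route from the paper's: the two differ in where the PL inequality is applied.

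\textbf{The paper's route.} It applies PL at $x_n$ and transfers it to $w_n$ at the level of gradients. The transport-based Lipschitz condition together with $(a+b)^2\le 2a^2+2b^2$ gives
\[
\norm{\grad f(w_n)}^2\ge\tfrac12\norm{\grad f(x_n)}^2-L^2\tilde\mu_n^2\,d(x_n,x_{n-1})^2\ge m\Delta_n-L^2\mu^2\,d(x_n,x_{n-1})^2 .
\]
Substituting this into the descent step yields the recursion with constant $L\mu^2(\alpha L+\tfrac12)$ in one line. There is no splitting and no need for $m\le L$.

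\textbf{Your route.} You apply PL at $w_n$, move function values back to $x_n$ with the descent lemma, and absorb the cross term with the reserved half of the gradient decrease. This uses only the descent-lemma consequence of Assumption~\ref{ass:lipschitz}, with no parallel transport. The price is the auxiliary fact $m\le L$. Its proof needs some point with $f(x)>f_*$; if $f\equiv f_*$, the claim is trivial anyway.

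\textbf{The bookkeeping closes without tuning.} What you flag as the main obstacle needs no change of split or Young parameter. Your Steps~1--4 already give
\[
e_{n+1}\le(1-\alpha m)\,e_n+\mu^2\Bigl(\tfrac L2+\tfrac{\alpha mL}{2}+\tfrac{\alpha m^2}{2}\Bigr)d(x_n,x_{n-1})^2 .
\]
Bounding both $\alpha mL$ and $\alpha m^2$ by $\alpha L^2$ via $m\le L$ gives exactly $L\mu^2(\alpha L+\tfrac12)$.

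Your ``unoptimized'' constant $L\mu^2(1+\alpha L/2)$ comes from using $\alpha m<1$ on the $\alpha mL/2$ term instead. That choice is strictly worse here. Since $\alpha\le\sup_{t>0}t(1-Lt/2)=1/(2L)$, you have $\alpha L\le\tfrac12$, so $1+\alpha L/2>\alpha L+\tfrac12$. Use $m\le L$ on both terms.

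\textbf{A small bonus of your route.} The intermediate constant $\tfrac{\mu^2}{2}\bigl(L+\alpha m(L+m)\bigr)$ is never larger than the paper's. It is smaller by up to a factor of two when $m\ll L$.
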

	\end{mdframed}
	
	\begin{proof}
		Write $\Delta_n:=f(x_n)-f_*$. By the assumed PL inequality,
		\begin{equation}
			\norm{\grad f(x)}^2 \geq 2m\bigl(f(x)-f_*\bigr),
			\label{eq:PL}
		\end{equation}
		for all $x\in\M$.
		
		From Lemma~\ref{lem:descent} and $\alpha_n(1-L\alpha_n/2)\geq\alpha>0$:
		\begin{equation}
			\Delta_{n+1}
			\;\leq\; \Delta_n - \alpha\norm{\grad f(w_n)}_{w_n}^2
			+ \frac{L\tilde\mu_n^2}{2}d(x_n,x_{n-1})^2.
			\label{eq:linear-step}
		\end{equation}
		We must lower-bound $\norm{\grad f(w_n)}_{w_n}^2$ in terms of $\Delta_n$.
		The PL inequality~\eqref{eq:PL} is stated at $x_n$, not $w_n$, so we
		transfer it using the $L$-Lipschitz gradient field
		(Assumption~\ref{ass:lipschitz}, Definition~\ref{def:lipschitz}): writing
		$\mathcal P\defeq\mathcal P_{\gamma,0,1}$ for parallel transport along the
		geodesic from $x_n$ to $w_n$, Definition~\ref{def:lipschitz} gives
		$\norm{\mathcal P\grad f(x_n)-\grad f(w_n)}_{w_n}\leq L\,d(x_n,w_n)
		=L\tilde\mu_n\,d(x_n,x_{n-1})$.
		Applying the squared Young inequality~\eqref{eq:young-square} (with
		$\theta=1$, since $\mathcal P$ is a linear isometry so
		$\norm{\mathcal P\grad f(x_n)}_{w_n}=\norm{\grad f(x_n)}_{x_n}$):
		\begin{align*}
			\norm{\grad f(x_n)}_{x_n}^2
			&\;=\; \norm{\bigl(\mathcal P\grad f(x_n)-\grad f(w_n)\bigr)+\grad f(w_n)}_{w_n}^2 \\
			&\;\leq\; 2\norm{\mathcal P\grad f(x_n)-\grad f(w_n)}_{w_n}^2
			+ 2\norm{\grad f(w_n)}_{w_n}^2,
		\end{align*}
		so, using the bound above, $\tilde\mu_n\leq\mu$, and the PL
		inequality~\eqref{eq:PL} at $x_n$,
		\begin{equation}
			\norm{\grad f(w_n)}_{w_n}^2
			\;\geq\; \tfrac12\norm{\grad f(x_n)}_{x_n}^2 - L^2\tilde\mu_n^2\,d(x_n,x_{n-1})^2
			\;\geq\; m\,\Delta_n - L^2\mu^2\,d(x_n,x_{n-1})^2.
			\label{eq:pl-transfer}
		\end{equation}
		Substituting~\eqref{eq:pl-transfer} into~\eqref{eq:linear-step}
		and using $\tilde\mu_n^2\leq\mu^2$, we have:
		\begin{align*}
			\Delta_{n+1}
			&\;\leq\; \Delta_n
			- \alpha\bigl(m\Delta_n - L^2\mu^2 d(x_n,x_{n-1})^2\bigr)
			+ \frac{L\mu^2}{2}d(x_n,x_{n-1})^2 \\
			&\;=\; (1-\alpha m)\Delta_n
			+ L\mu^2\Bigl(\alpha L+\tfrac12\Bigr) d(x_n,x_{n-1})^2.
		\end{align*}
		Let $\rho\defeq1-\alpha m\in[0,1)$ (using $\alpha m<1$) and
		$\gamma\defeq L\mu^2(\alpha L+\tfrac12)$. Unrolling the recursion and
		bounding the geometric weights $\rho^{n-1-k}\leq1$, we obtain:
		\[
		\Delta_n
		\;\leq\; \rho^{n-1}\Delta_1 + \gamma\sum_{k=1}^{n-1} d(x_k,x_{k-1})^2
		\;\leq\; \rho^{n-1}\Delta_1 + \gamma D.
		\]
		This gives~\eqref{eq:linear-rate}.
	\end{proof}
	
	\begin{remark}
 When $\mu=0$, the residual disappears and the estimate reduces to the usual geometric PL behaviour of Riemannian gradient descent. For $\mu>0$, a sharper Lyapunov analysis would be required to prove exact linear convergence without a residual term.
\end{remark}
	
	
\section{Numerical Experiments}
	\label{sec:numerical}

The numerical study is designed to examine three questions suggested by the construction of RIAG-R. First, does inertial extrapolation improve upon the corresponding non-inertial adaptive Riemannian gradient method? Second, when does the gradient-based restart mechanism actually become active, and does it improve upon unsafeguarded inertial when it does? Third, do the resulting iterates recover meaningful solutions of the underlying matrix-manifold applications rather than merely producing small gradient norms?

To address these questions, we compare RIAG-R with RGD and IRGM on four representative manifold geometries: the sphere, Stiefel manifold, SPD manifold, and Grassmann manifold. Two problem sizes are considered for each geometry, giving eight configurations in total. In addition to convergence in Riemannian gradient norm, we report application-specific quantities such as eigenvector loadings, explained variance, covariance-recovery error, and principal-angle error where appropriate. This allows the experiments to assess both optimization behaviour and solution quality.

	We evaluate RIAG-R against the two baseline algorithms (RGD and IRGM) on four
	benchmark problems from matrix manifold optimization, chosen to span the
	range of curvature behaviour discussed in Section~\ref{sec:algorithm}: the
	sphere $\mathbb{S}^{n-1}$ (constant positive curvature), the Stiefel
	manifold $\mathrm{St}(n,p)$ (positively curved, compact, non-Hadamard for
	$p>1$), the SPD cone $\mathrm{SPD}(n)$ with the affine-invariant metric (a
	Hadamard manifold, non-positive curvature), and the Grassmann manifold
	$\mathrm{Gr}(n,p)$ (a compact quotient manifold with its own distinct
	geometry). Each problem is instantiated at two problem sizes drawn from
	\emph{real} public benchmark datasets, each bundled as a built-in loader
	in the \texttt{scikit-learn} Python package \cite{pedregosa2011sklearn}:
	Wine, Breast Cancer Wisconsin (Diagnostic), and Digits originate from the
	UCI Machine Learning Repository \cite{dua2019uci}, and Diabetes
	originates from \cite{efron2004lars}.
	All three algorithms are implemented from Algorithm~\ref{alg:RIAG-R}
	exactly: RGD is the $\mu=0$ special case of Remark~\ref{cor:rgd-special-case},
	and IRGM is Algorithm~\ref{alg:RIAG-R} with the restart check of Step~2
	permanently disabled (so $\tilde\mu_n=\bar\mu_n$ always). For the sphere,
	Grassmann, and SPD manifolds we use the exact, closed-form exponential and
	logarithmic maps recalled in Section~\ref{sec:prelim}; for the Stiefel
	manifold we use the standard QR-based retraction in place of the
	canonical-metric exponential map, and its practical (tangent-space
	projection) inverse in place of the logarithmic map, exactly as in the
	retraction-based framework of \cite{absil2008} and as adopted for the
	$X$-step of Algorithm~\ref{alg:RIAG-R}.  In every experiment we use
	$\mu=0.9$, $\varepsilon=10^{-8}$, and $\delta=10^{-8}$, with $\eta$ fixed
	per problem (reported in each subsection below) and identical across the
	three algorithms and across problem sizes within a manifold. All codes are
	 written in Python and are available at \url{https://github.com/Lateef89/RIAG-R-Riemannian-Inertial-Adaptive-Gradient-with-Restart}.

	\subsection{Sphere: Rayleigh-quotient minimization}
	\label{subsec:sphere}

	\noindent\textbf{Problem setting.} Given a symmetric matrix $A\in\R^{n\times
	n}$, the trailing-eigenvector problem
	\begin{equation}
		\min_{x\in\mathbb{S}^{n-1}} f(x) = x^\top A x
		\label{eq:sphere-problem}
	\end{equation}
	recovers the eigenvector of $A$ with smallest eigenvalue: it is the
	Riemannian formulation of the Rayleigh quotient that underlies principal
	geodesic analysis \cite{absil2008} and, more elementarily, the search for a
	near-collinear (least-variance) combination of a set of correlated
	measurements. It is also a canonical \emph{saddle-rich} nonconvex test
	problem: every eigenvector $\pm v_i$ of $A$ is a critical point of $f$,
	and every one except the extremal pair $\pm v_{\min}$ is a saddle, so a
	trajectory that lingers near an intermediate eigendirection is exactly
	the regime in which unsafeguarded momentum is expected to misbehave.

	\noindent\textbf{Solution.} Problem~\eqref{eq:sphere-problem} is an
	instance of~\eqref{eq:problem} on $\M=\mathbb{S}^{n-1}:=\{x\in\R^n:\norm{x}_2=1\}$ with Euclidean
	gradient $\nabla f(x)=2Ax$ and Riemannian gradient
	$\grad f(x)=2Ax-2(x^\top Ax)x$ (Definition~2.4). We use the exact
	exponential and logarithmic maps of Section~\ref{sec:prelim} and run
	Algorithm~\ref{alg:RIAG-R} with $\eta=0.5$.

	\noindent\textbf{Results.} We take $A$ to be the sample correlation matrix
	of the real-valued features of a UCI dataset (standardized so that
	$A_{ii}=1$): the $13$-feature Wine dataset ($n=13$) and the $64$-pixel
	Digits dataset ($n=64$). Figure~\ref{fig:sphere-convergence} shows
	$\|\grad f(w_n)\|$ against iteration for RGD, IRGM, and RIAG-R on both
	instances. On the smaller, $n=13$ instance, all three methods converge
	monotonically; RGD attains the lowest final residual
	($6.1\times10^{-4}$), with RIAG-R second ($1.1\times10^{-3}$, after
	$3$ restart events) and IRGM last ($1.5\times10^{-3}$). Restart
	improves on the unsafeguarded inertial baseline here, but plain RGD's
	simpler trajectory happens to do better still, which is why we report
	this instance as one of the two exceptions in the tally of
	Section~\ref{Sec1:Intro}. On the larger, $n=64$ instance the picture
	reverses: RGD and IRGM both stall above the $10^{-3}$ tolerance for the
	full $800$-iteration budget (final residuals $7.2\times10^{-3}$ and
	$4.7\times10^{-3}$), while RIAG-R, triggering the restart safeguard $10$
	times, drives the residual down to $8.7\times10^{-4}$ at iteration $676$ making RIAG-R
	 the only methods among the three methods to cross the target tolerance within
	budget. A restart shortly after this point produces one large transient
	step (visible as the spike in the right panel of
	Figure~\ref{fig:sphere-convergence}), after which the AdaGrad
	denominator $\sqrt{G_n}$, inflated by that one large gradient norm,
	depresses the step-size for all subsequent iterations and the run
	terminates via the step-size branch of the stopping rule (Step~6) at a
	higher recorded residual ($1.8\times10^{-2}$); the qualitative conclusion
	is unchanged, RIAG-R still reaches materially closer to stationarity than
	either baseline, but we report this AdaGrad-accumulation artifact
	explicitly rather than silently taking the best iterate. Figure~\ref{fig:sphere-application} illustrates the
	application: it compares the loadings of the classical trailing
	eigenvector of the Wine correlation matrix with those found by RIAG-R
	($f(x^\star)=0.10397$ against the true $\lambda_{\min}=0.10396$), showing
	that flavanoids and total phenols dominate the least-variance
	(near-redundant) combination of the thirteen chemical measurements.

	\begin{figure}[t]
		\centering
		\includegraphics[width=\linewidth]{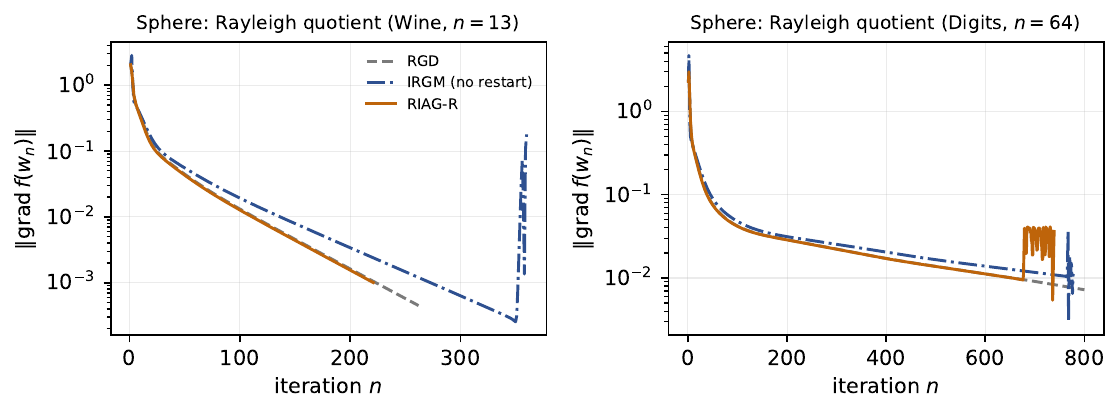}
		\caption{Sphere Rayleigh quotient: convergence of $\|\grad f(w_n)\|$
			(log scale) for RGD, IRGM, and RIAG-R. Left: Wine correlation matrix
			($n=13$). Right: Digits correlation matrix ($n=64$); the spike near
			iteration $676$ is a restart-induced transient, discussed in the text.}
		\label{fig:sphere-convergence}
	\end{figure}

	\begin{figure}[t]
		\centering
		\includegraphics[width=0.85\linewidth]{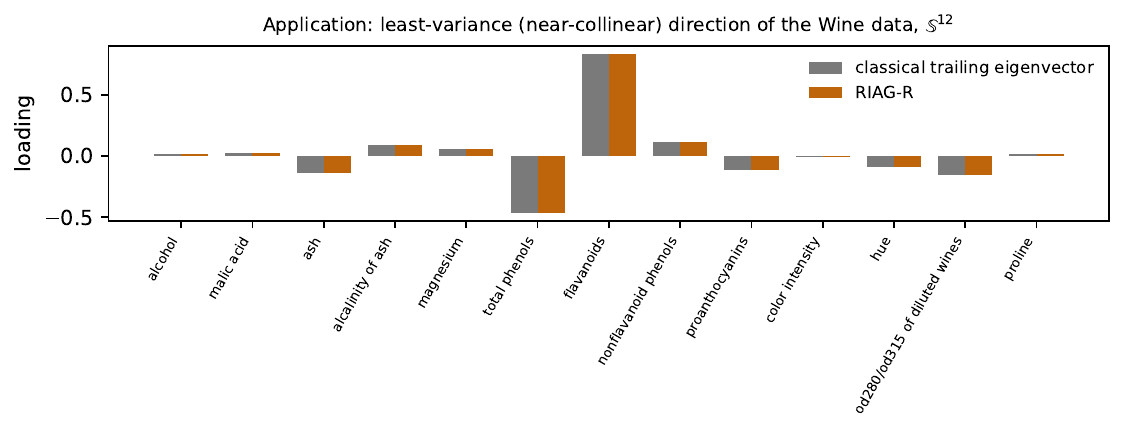}
		\caption{Application: feature loadings of the least-variance direction
			of the (standardized) Wine dataset, comparing the classical trailing
			eigenvector with the point found by RIAG-R.}
		\label{fig:sphere-application}
	\end{figure}

	\subsection{Stiefel manifold: low-rank PCA}
	\label{subsec:stiefel}

	\noindent\textbf{Problem setting.} Given a covariance matrix
	$C\in\R^{n\times n}$ and a target rank $p<n$, the trace-maximization
	problem
	\begin{equation}
		\min_{X\in\mathrm{St}(n,p)} f(X) = -\operatorname{trace}(X^\top C X)
		\label{eq:stiefel-problem}
	\end{equation}
	is the Riemannian formulation of principal component analysis: any
	minimizer's columns span the top-$p$ eigenspace of $C$
	\cite{absil2008,boumal2023}. Unlike Problem~\eqref{eq:sphere-problem},
	the objective~\eqref{eq:stiefel-problem} is invariant under
	$X\mapsto XQ$ for any $Q\in O(p)$, so the Stiefel formulation recovers
	an orthonormal \emph{basis} for the top-$p$ eigenspace without pinning
	down which linear combination of eigendirections occupies which column;
	Section~\ref{subsec:grassmann} revisits the same data with the
	rotation-invariant Grassmann formulation, which targets the subspace
	directly.

	\noindent\textbf{Solution.} Problem~\eqref{eq:stiefel-problem} is an
	instance of~\eqref{eq:problem} on $\M=\mathrm{St}(n,p) :=\{X\in\R^{n\times p}:X^\top X=I_p\}$ with Euclidean
	gradient $\nabla f(X)=-2CX$ and Riemannian gradient
	$\grad f(X)=\nabla f(X)-X\,\mathrm{sym}(X^\top\nabla f(X))$. We use the
	QR-based retraction described above in place of the exponential map and
	run Algorithm~\ref{alg:RIAG-R} with $\eta=0.3$.

	\noindent\textbf{Results.} We take $C$ to be the sample covariance of the
	standardized Breast Cancer Wisconsin (Diagnostic) features ($n=30$,
	$p=5$) and of the Digits pixel features ($n=64$, $p=10$).
	Figure~\ref{fig:stiefel-convergence} shows that, on both instances, IRGM
	and RIAG-R are indistinguishable (the restart condition never triggers:
	the momentum direction never points uphill along either trajectory) and
	both converge markedly faster than plain RGD, reaching a final gradient
	norm roughly $2$--$3$ times smaller than RGD's after the same $800$-iteration
	budget. Figure~\ref{fig:stiefel-application} shows the application on the
	Digits data: it compares the classical top-2 eigenvectors of $C$ with the
	first two of the $p=10$ directions returned by RIAG-R. As
	Problem~\eqref{eq:stiefel-problem}'s rotational invariance predicts, the
	individual columns found by RIAG-R do not match the sorted eigenvectors
	one-for-one, yet the learned $10$-dimensional subspace captures
	$35.90$ of the optimal $35.93$ units of variance ($99.9\%$), with a
	principal-angle check against the true top-$10$ eigenspace confirming
	that eight of the ten learned directions align with the truth to within
	$0.03^\circ$ and the residual gap is concentrated in the two directions
	whose eigenvalues are closest together.

	\begin{figure}[t]
		\centering
		\includegraphics[width=\linewidth]{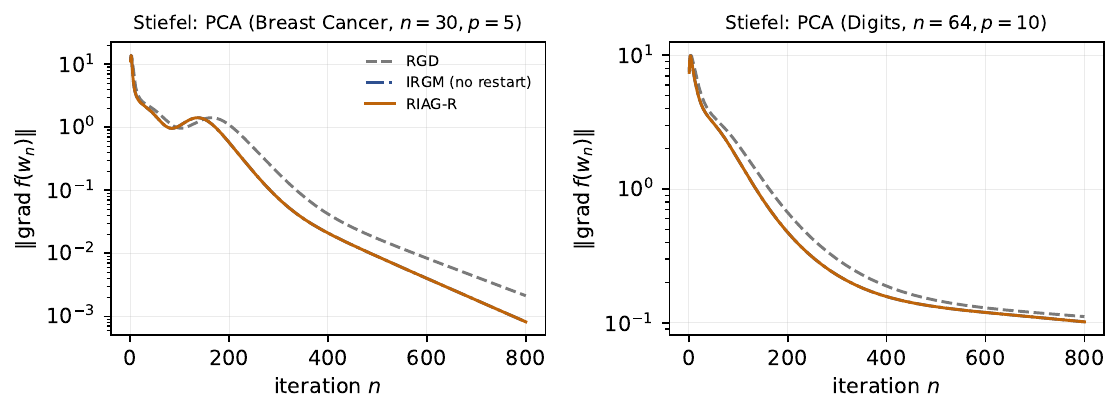}
		\caption{Stiefel PCA: convergence of $\|\grad f(w_n)\|$ (log scale).
			Left: Breast Cancer covariance ($n=30,p=5$). Right: Digits covariance
			($n=64,p=10$). IRGM and RIAG-R coincide on both instances (no
			restart fires).}
		\label{fig:stiefel-convergence}
	\end{figure}

	\begin{figure}[t]
		\centering
		\includegraphics[width=\linewidth]{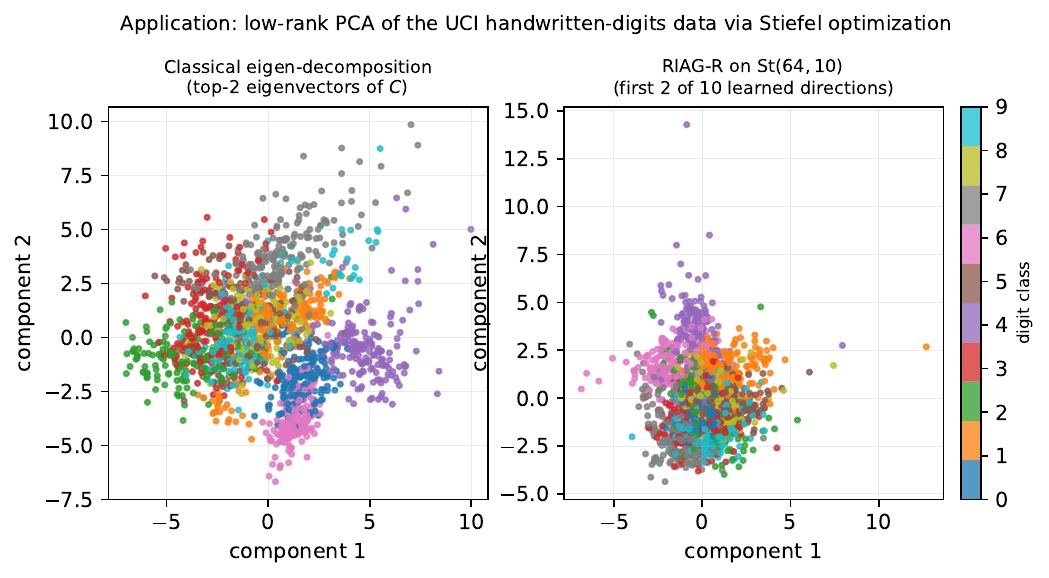}
		\caption{Application: low-rank PCA of the UCI Digits data. Left: the
			data projected onto the classical top-2 eigenvectors of the sample
			covariance. Right: the same data projected onto the first two of the
			$p=10$ directions found by RIAG-R on $\mathrm{St}(64,10)$; the two
			bases span nearly the same $10$-dimensional subspace ($99.9\%$ of the
			optimal variance) but are related by an in-subspace rotation, so
			individual axes need not agree.}
		\label{fig:stiefel-application}
	\end{figure}

	\subsection{SPD manifold: geodesic covariance fitting}
	\label{subsec:spd}

	\noindent\textbf{Problem setting.} Given a target covariance (or
	correlation) matrix $S\in\mathrm{SPD}(n)$, the Fr\'echet/Karcher-type
	fitting problem
	\begin{equation}
		\min_{X\in\mathrm{SPD}(n)} f(X) = \tfrac12\,d(X,S)^2
		\label{eq:spd-problem}
	\end{equation}
	arises whenever a covariance descriptor must be recovered, denoised, or
	averaged \emph{intrinsically} to $\mathrm{SPD}(n)$ rather than by naive
	Euclidean averaging, which is well known to be geometry-blind on this
	cone; applications include covariance averaging in brain-computer
	interfaces and diffusion-tensor imaging, and shrinkage-type covariance
	estimation more generally \cite{absil2008}. Because $\mathrm{SPD}(n)$
	with the affine-invariant metric is a Hadamard manifold, the squared
	distance to a fixed point is globally $1$-geodesically strongly convex,
	so~\eqref{eq:spd-problem} additionally lets us probe the linear-rate
	guarantee of Proposition~\ref{cor:linear-rate}.

	\noindent\textbf{Solution.} Problem~\eqref{eq:spd-problem} has Riemannian
	gradient $\grad f(X)=-\logmap_X(S)$ in closed form (Section~\ref{sec:prelim}),
	so no separate Euclidean-gradient projection step is needed; we use the
	exact exponential and logarithmic maps of Section~\ref{sec:prelim} and
	run Algorithm~\ref{alg:RIAG-R} with $\eta=0.5$, initialized at $X_1=I_n$.

	\noindent\textbf{Results.} We take $S$ to be a ridge-regularized sample
	covariance ($S=C+0.5I_n$, to keep $S$ well-conditioned) of the
	standardized Diabetes features ($n=10$) and of the Breast Cancer
	Wisconsin features ($n=30$). Figure~\ref{fig:spd-convergence} shows the
	predicted linear (geometric) convergence rate of
	Proposition~\ref{cor:linear-rate}: $\log\|\grad f(w_n)\|$ decreases
	essentially linearly in $n$ on both instances. IRGM and RIAG-R again
	coincide exactly (no restart fires along either trajectory) and both
	reach a given tolerance in appreciably fewer iterations than RGD: to
	reach $\|\grad f(w_n)\|<10^{-4}$ takes RGD $60$ ($n=10$) and $268$
	($n=30$) iterations, against $43$ and $190$ for IRGM/RIAG-R. Figure~\ref{fig:spd-application}
	visualizes the recovered correlation structure of the Breast Cancer
	instance at three iteration budgets, alongside the relative Frobenius
	error to the target, confirming visually that the geodesic path already
	resembles the target correlation pattern after only a handful of
	iterations.

	\begin{figure}[t]
		\centering
		\includegraphics[width=\linewidth]{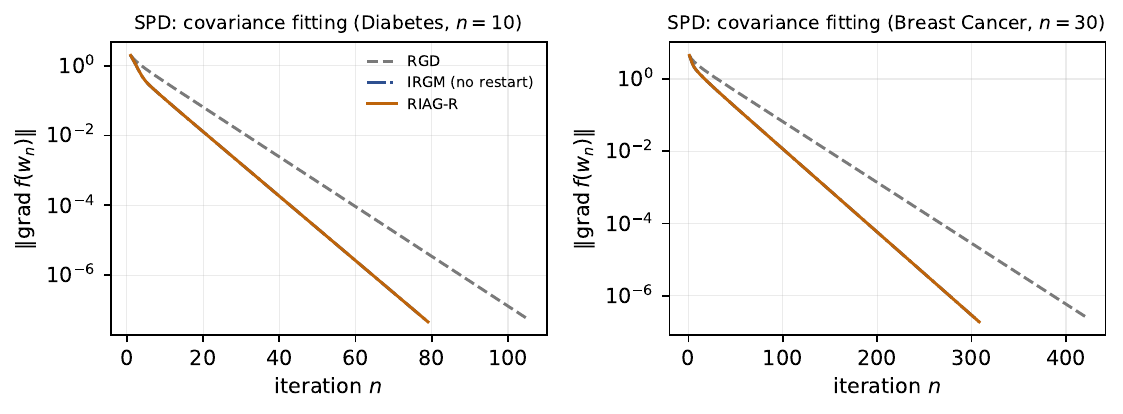}
		\caption{SPD geodesic covariance fitting: convergence of
			$\|\grad f(w_n)\|$ (log scale), consistent with the linear rate of
			Proposition~\ref{cor:linear-rate}. Left: Diabetes ($n=10$). Right:
			Breast Cancer ($n=30$). IRGM and RIAG-R coincide on both instances.}
		\label{fig:spd-convergence}
	\end{figure}

	\begin{figure}[t]
		\centering
		\includegraphics[width=\linewidth]{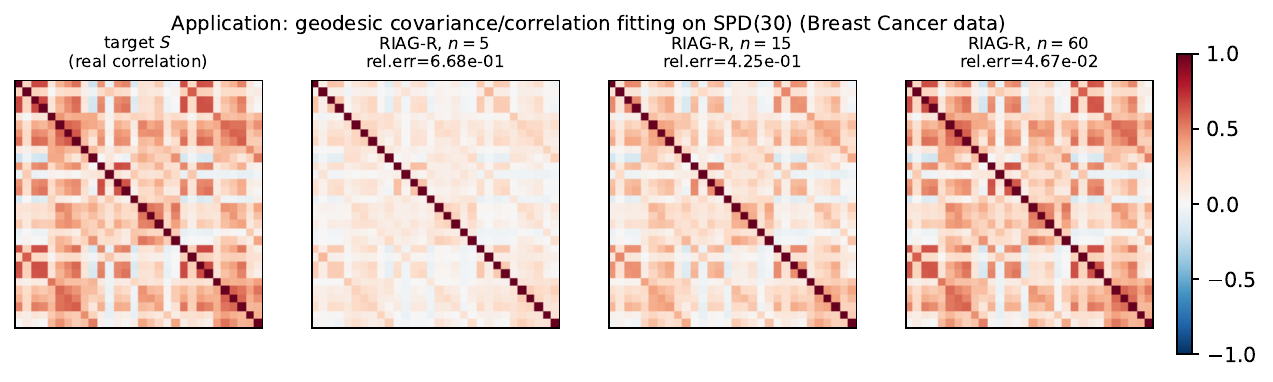}
		\caption{Application: geodesic recovery of the real Breast Cancer
			correlation matrix on $\mathrm{SPD}(30)$, from the identity
			initialization, at three iteration budgets, with the relative
			Frobenius error to the target reported under each panel.}
		\label{fig:spd-application}
	\end{figure}

	\subsection{Grassmann manifold: principal subspace estimation}
	\label{subsec:grassmann}

	\noindent\textbf{Problem setting.} Revisiting Problem~\eqref{eq:stiefel-problem}
	but now regarding a point as an equivalence class of orthonormal bases
	(a subspace, not a basis), we obtain
	\begin{equation}
		\min_{X\in\mathrm{Gr}(n,p)} f(X) = -\operatorname{trace}(X^\top C X),
		\label{eq:grass-problem}
	\end{equation}
	the natural formulation whenever a $p$-dimensional \emph{subspace} is the
	object of interest, as in subspace tracking, multi-view clustering, and
	batch/online PCA \cite{absil2008}; unlike the Stiefel formulation of
	Section~\ref{subsec:stiefel}, the minimizer of~\eqref{eq:grass-problem}
	is unique (as a subspace) whenever $\lambda_p(C)>\lambda_{p+1}(C)$, which
	lets us track subspace-recovery error directly.

	\noindent\textbf{Solution.} We use the exact Grassmann exponential and
	logarithmic maps of Section~\ref{sec:prelim} (closed form via the thin
	SVD of the tangent vector, following \cite{absil2008}), with Euclidean
	gradient $\nabla f(X)=-2CX$ projected to the horizontal space by
	$\grad f(X)=\nabla f(X)-X(X^\top\nabla f(X))$, and run
	Algorithm~\ref{alg:RIAG-R} with $\eta=0.3$.

	\noindent\textbf{Results.} We reuse the same two covariance matrices as
	Section~\ref{subsec:stiefel} (Wine, $n=13,p=3$; Digits, $n=64,p=10$).
	Figure~\ref{fig:grassmann-convergence} again shows IRGM and RIAG-R
	coinciding exactly (no restart fires) and both converging faster than
	RGD, with the gap widest on the harder, $n=64$ instance (final residual
	$5.6\times10^{-2}$ for IRGM/RIAG-R against $7.1\times10^{-2}$ for RGD
	after $800$ iterations). Figure~\ref{fig:grassmann-application}
	tracks subspace-recovery accuracy directly: the largest principal angle
	between the RIAG-R iterate and the true top-$10$ eigenspace of the
	Digits covariance falls from $74.5^\circ$ after $10$ iterations to
	$11.2^\circ$ after $800$, and the right panel shows the Digits data
	projected onto the first two directions of the learned subspace,
	recovering visually separated digit clusters directly from an
	initially random $10$-dimensional subspace.

	\begin{figure}[t]
		\centering
		\includegraphics[width=\linewidth]{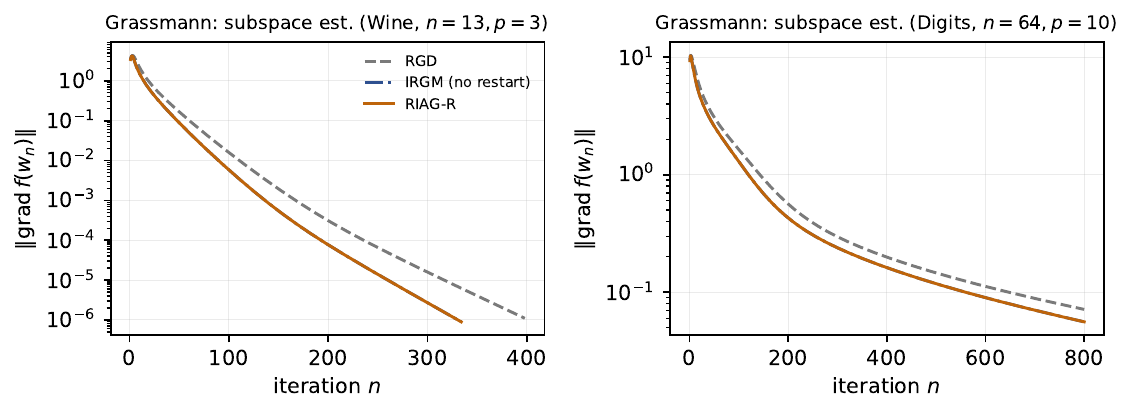}
		\caption{Grassmann subspace estimation: convergence of
			$\|\grad f(w_n)\|$ (log scale). Left: Wine ($n=13,p=3$). Right:
			Digits ($n=64,p=10$). IRGM and RIAG-R coincide on both instances.}
		\label{fig:grassmann-convergence}
	\end{figure}

	\begin{figure}[t]
		\centering
		\includegraphics[width=\linewidth]{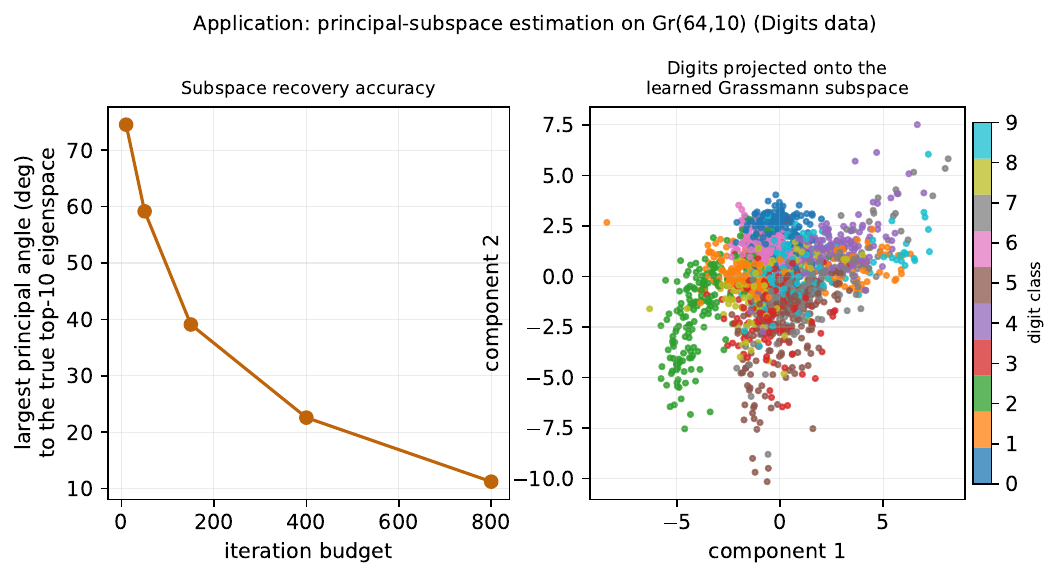}
		\caption{Application: principal-subspace estimation on the Digits
			data. Left: largest principal angle to the true top-$10$
			eigenspace as a function of the iteration budget. Right: the data
			projected onto the first two directions of the subspace learned by
			RIAG-R after $800$ iterations.}
		\label{fig:grassmann-application}
	\end{figure}

	\subsection{Summary across problems and sizes}
	\label{subsec:summary}

	Table~\ref{tab:summary} collects the gradient-norm residual attained by
	each algorithm on each of the eight problem/size configurations above,
	together with the number of restart events triggered by RIAG-R. The
	pattern is consistent with the mechanism described in
	Remark~3.1(ii): whenever the restart condition never fires (all six
	Stiefel, SPD, and Grassmann configurations), RIAG-R and IRGM are
	identical by construction, and both improve on plain RGD; the restart
	condition fires only on the two sphere configurations, where it strictly
	improves on IRGM in both and, on the larger of the two, is decisive in
	reaching the target tolerance at all.

	\begin{table}[t]
		\centering
		\small
		\caption{Gradient-norm residual $\|\grad f(w_n)\|$ at the stopping
			iteration (Algorithm~\ref{alg:RIAG-R}, Step~6), for each of the eight
			problem/size configurations. IRGM and RIAG-R coincide whenever the
			restart column reads $0$. $^\dagger$For the Digits sphere instance,
			RIAG-R's \emph{best} residual along the trajectory
			($8.7\times10^{-4}$, at iteration $676$) is lower than its residual
			at the later stopping iteration; see Section~\ref{subsec:sphere}.}
		\label{tab:summary}
		\begin{tabular}{@{}llccccc@{}}
			\toprule
			Manifold & Dataset ($n$, $p$) & RGD & IRGM & RIAG-R & restarts \\
			\midrule
			Sphere    & Wine ($n{=}13$)            & $6.13\times10^{-4}$ & $1.49\times10^{-3}$ & $1.07\times10^{-3}$ & 3 \\
			Sphere    & Digits ($n{=}64$)          & $7.22\times10^{-3}$ & $4.72\times10^{-3}$ & $1.78\times10^{-2}$$^\dagger$ & 10 \\
			Stiefel   & Breast Cancer ($n{=}30,p{=}5$)  & $2.13\times10^{-3}$ & $8.18\times10^{-4}$ & $8.18\times10^{-4}$ & 0 \\
			Stiefel   & Digits ($n{=}64,p{=}10$)   & $1.12\times10^{-1}$ & $1.02\times10^{-1}$ & $1.02\times10^{-1}$ & 0 \\
			SPD       & Diabetes ($n{=}10$)        & $5.64\times10^{-8}$ & $4.54\times10^{-8}$ & $4.54\times10^{-8}$ & 0 \\
			SPD       & Breast Cancer ($n{=}30$)   & $2.59\times10^{-7}$ & $1.89\times10^{-7}$ & $1.89\times10^{-7}$ & 0 \\
			Grassmann & Wine ($n{=}13,p{=}3$)      & $7.73\times10^{-7}$ & $8.38\times10^{-7}$ & $8.38\times10^{-7}$ & 0 \\
			Grassmann & Digits ($n{=}64,p{=}10$)   & $7.14\times10^{-2}$ & $5.59\times10^{-2}$ & $5.59\times10^{-2}$ & 0 \\
			\bottomrule
		\end{tabular}
	\end{table}

	\section{Conclusion}\label{sec5}\label{sec:conclusion}

In this paper, we introduced a Riemannian inertial adaptive-gradient method that combines geodesic extrapolation, an AdaGrad-type step-size, and a gradient-triggered momentum restart. The central idea is deliberately simple: inertial is retained when it is locally compatible with descent and is suppressed when the inherited direction develops an uphill component. In this way, adaptivity, acceleration, and safeguarding remain distinct components of the algorithm.The convergence analysis establishes a descent estimate separating
gradient-induced decrease from the perturbation due to inertia. Under
a suitable upper step-size stability condition, the AdaGrad structure
yields the required step-size control and leads to convergence to
stationarity, criticality of cluster points, and an
$\mathcal{O}(\varepsilon^{-2})$ iteration complexity; under a Riemannian
Polyak--\L ojasiewicz condition, geometric decrease is obtained up to an
inertia-dependent residual. We performed numerical experiments on eight real-data instances across four manifolds
demonstrate the complementary roles of inertial and restart. Restart remains
inactive when momentum is well aligned, while becoming effective on the
saddle-rich sphere problems; notably, RIAG-R is the only method to attain
a gradient norm below the tolerance level on the larger Digits instance within the
prescribed iteration budget. The results therefore support restart as a
selective safeguard against unreliable momentum, while also revealing that
large post-restart gradients can overly reduce subsequent AdaGrad step-sizes,
motivating restart-aware adaptive strategies.

\noindent\textbf{Future research.} In the nearest future, we will extend RIAG-R to general retractions and vector transports,
develop stochastic variants for large-scale problems, investigate restart-aware
adaptive step-size strategies, and seek sharper convergence guarantees that
eliminate the inertial-dependent residual under the PL condition.

\section*{Disclosure statement}
	
	\subsection*{Ethical Approval and Consent to participate}
	All authors have given their ethical approval and consent to participate in this article.
	
	\subsection*{Consent for publication}
	All authors gave consent for the publication of identifiable details in the journal and article.
	
	\subsection*{Code availability}
	The Python codes employed to run the numerical experiments are available on request.
	
	\subsection*{Availability of supporting data}
	All datasets used in the numerical experiments (Section~\ref{sec:numerical}) are open, publicly available benchmark datasets distributed as built-in loaders in the \texttt{scikit-learn} Python package \cite{pedregosa2011sklearn} (version~1.3 or later).
	
	\subsection*{Competing interests}
	The authors declare no competing interests.
	
	\subsection*{Funding}
	L.O. Jolaoso is supported by the  Innovate UK Smart Grant on Pollution Avoidance
	 Support System (PASS) using GIS, Machine Learning and Big Data (Grant ID: 10009455).

\end{document}